\documentclass[12pt,a4paper,reqno]{amsart}
\usepackage{amsmath,amssymb,ifthen}
\usepackage{graphicx,psfrag,subfigure}
\usepackage{cleveref}
\usepackage{fullpage}

\def\R{{\mathbb R}}
\def\C{{\mathbb C}}
\def\N{{\mathbb N}}

\def\ba{\boldsymbol{a}}
\def\bb{\boldsymbol{b}}
\def\bc{\boldsymbol{c}}
\def\bv{\boldsymbol{v}}

\def\bw{\boldsymbol{w}}
\def\bm{\boldsymbol{m}}

\def\bT{\boldsymbol{T}}

\def\bn{\boldsymbol{n}}

\def\II{{\mathcal I}}

\def\RR{{\mathcal R}}

\def\norm#1#2{\|#1\|_{#2}}

\def\set#1#2{\big\{#1\,:\,#2\big\}}

\def\eps{\varepsilon}


\def\normL2#1#2{\|#1\|_{L^2(#2)}}

\newcommand{\dual}[3][]{#1\langle#2\,,\,#3#1\rangle}

\def\br{\boldsymbol{r}}
\def\bR{\boldsymbol{R}}

\newtheorem{theorem}{Theorem}

\newtheorem{lemma}[theorem]{Lemma}

\newtheorem{definition}[theorem]{Definition}

\begin{document}
\title{Existence of arbitrarily smooth solutions of the LLG equation in 3D with natural boundary conditions}

\author{Michael Feischl and Thanh Tran}

\begin{abstract}
We prove that the Landau-Lifshitz-Gilbert equation in three space dimensions with homogeneous Neumann boundary conditions
admits arbitrarily smooth solutions, given that the initial data is sufficiently close to a constant function.
\end{abstract}

\maketitle

\section{Introduction}
The Landau-Lifshitz-Gilbert (LLG) equation   is widely considered as a valid model of micromagnetic
phenomena occurring in, e.g., magnetic sensors, recording heads,
and magneto-resistive storage device~\cite{Gil,LL,prohl2001}. It describes the  precessional motion of magnetization in ferromagnets. The main difficulty of the LLG equation is its strongly non-linear character.

Classical results concerning existence and non-uniqueness of solutions can
be found in~\cite{as,vis}. The existence of weak solutions is proved for 2D and 3D in~\cite{alouges}. It is known that weak solutions are in general not unique but exist globally.
Throughout the literature, there are various works on weakly-convergent numerical
approximation methods for the LLG (coupled to the Maxwell-equations)
equations~\cite{alouges,alok,bako,bapr,cim,ellg,thanh} (the list is not
exhausted) even without an artificial projection step~\cite{michele,LLGnum}.

This paper considers the question of existence of arbitrarily smooth strong solutions of this equation.
For the case of the 2D torus, the book~\cite{prohl2001} gives an exhaustive overview on results concerning the existence and regularity of strong solutions. A brief summary of the state of the art for 2D domains
with periodic boundary conditions could be phrased as follows: There exist arbitrarily smooth solutions provided that the initial data is sufficiently close to a constant function. Moreover, there exist arbitrarily smooth local-in-time solutions for 
initial data of finite energy (see, e.g.,~\cite{2dexist}). For the 3D case, much less is known in terms of strong solvability. For the 3D torus (with periodic boundary conditions)~\cite{cimExist} proves $H^2$-regularity local in time
for the coupled system of LLG and Maxwell-equations. The work~\cite{3dell} proves global existence of strong solutions for small initial energies on small ellipsoids. The survey
article~\cite{harmonicmap} summarizes results in the context of the evolution of
harmonic maps (which however does not cover the LLG equation). A recent
paper~\cite{Melcher12} studies the existence, uniqueness and asymptotic behavior
of solutions in the whole spatial space~$\R^3$.

To the authors best knowledge, this work is the first which proves existence of arbitrarily smooth (non-trivial) solutions on bounded 3D domains.
It also gives a first result on existence of arbitrarily smooth strong solutions with natural boundary conditions (in 2D and 3D). It is worth mentioning that the proof is constructive in 
the sense that a convergent sequence of approximate solutions is designed algorithmically. The limit of this sequence turns out to be a smooth strong solution of the LLG equation.

The main motivation to prove existence of smooth strong solutions for the LLG equation originated in the recent work~\cite{LLGnum} by the authors. There, we proved a~priori error estimates
for a time integrator for the LLG equations (as well as the coupled LLG-Maxwell system) which imply strong convergence of the numerical method in case of smooth strong solutions.
Thus, the present work justifies the assumptions in~\cite{LLGnum}.

\section{The Landau-Lifshitz-Gilbert equation}
Consider a bounded {smooth} domain $D\subset \R^3$ with connected
boundary $\Gamma$ having the outward normal vector $\bn$. Note that all the results in this paper also hold true for $D\subset \R^n$, $n\geq 2$. For brevity of presentation, however,
we only consider the physically most relevant case $n=3$.
We define $D_T:=(0,T)\times D$ and $\Gamma_T := (0,T)\times\Gamma$ for $T>0$.
 We start with the LLG equation which reads as
 \begin{align}
  \bm_t - \alpha\bm\times\bm_t &= -C_e\bm \times \Delta\bm
 \quad\text{in }D_T\label{eq:llg}
\end{align}
for some constant $C_e>0$.
Here the parameter {$\alpha$ is a positive constant.}
It follows from~\cref{eq:llg} that $|\bm|$ is constant. We
follow the usual practice to normalize~$|\bm|$.
The following conditions are imposed on the solution of~\cref{eq:llg}:
\begin{subequations}\label{eq:con}
\begin{alignat}{2}
\partial_n\bm&=0
&& \quad\text{on }\Gamma_T,\label{eq:con1} 
\\
|\bm| &=1
&& \quad\text{in } D_T, \label{eq:con2}
\\
\bm(0,\cdot) &= \bm^0
&& \quad\text{in } D, \label{eq:con3}
\end{alignat}
\end{subequations}
where $\partial_n$ denotes the normal derivative.

The initial data~$\bm^0$ satisfies~$|\bm^0|=1$ in~$D$.
The condition~\cref{eq:con2} together with basic
properties of the cross product leads to the following equivalent
formulation of~\cref{eq:llg}:
\begin{align}\label{eq:llg2}
\alpha\bm_t+\bm\times\bm_t= C_e\Delta \bm-C_e(\bm\cdot \Delta\bm)\bm
\quad\text{in } D_T.
\end{align}

Before stating the main result of the article, we set some notations.
Bold letters (e.g.~$\bv$) will be used for vector functions.
However, as there is no confusion, we still use~$L^2(D_T)$ to denote the Lebesgue
space of vector functions taking values in~$\R^3$, i.e., we will
write~$\bv\in L^2(D_T)$ instead of~$\bv\in L^2(D_T)^3$. 
The same rule applies to other function spaces.

The following function spaces will be frequently used. For 
any non-negative integer~$k\in\N_0=\{0,1,2,\ldots\}$, we define
\[
H^{k,2k}(D_T):=\set{v\in L^2(D_T)}{\norm{v}{H^{k,2k}(D_T)}<\infty}
\]
where the norm is defined by
\begin{align*}
 \norm{v}{H^{k,2k}(D_T)}
:=
\sum_{\ell=0}^k\norm{\partial_t^\ell v}{L^2(0,T;H^{2k-2\ell}(D))}.
\end{align*}
The corresponding seminorm is
\begin{align*}
  |v|_{H^{k,2k}(D_T)}:=\Big(\sum_{\ell=1}^{2k}\norm{D^\ell
v}{L^2(D_T)}^2\Big)^{1/2}+\sum_{\ell=1}^k\norm{\partial_t^\ell
v}{L^2(0,T;H^{2k-2\ell}(D))},
\end{align*}
where $D^\ell$ denotes $\ell^{\rm{th}}$-order partial derivatives with
respect to the spatial variables.

Finally, we define
\begin{equation}\label{eq:H1 sta}
H^1_\star(D)
:=
\set{\bv\in H^1(D)}{\Delta\bv\in L^2(D) 
\quad\text{and}\quad \partial_n\bv = 0 \text{ on } \Gamma}.
\end{equation}

We are now ready to state the main result of the paper.
\begin{theorem}\label{thm:main}
Assume that the initial data~$\bm^0$ satisfies $|\bm^0|=1$ in~$D$
and, for some integer $k\ge3$,
\begin{enumerate}
\renewcommand{\labelenumi}{(\roman{enumi})}
\item
$\bm^0\in H^{2k}(D)\cap H^1_\star(D)$;
\item
$D^{j}\bm^0\in H^1_\star(D)$ for all $j/2\leq k-1$;
\item
$|\bm^0|_{H^{2k}(D)}$ is sufficiently small.
\end{enumerate}
Then the problem~\cref{eq:llg}--\cref{eq:con} has a smooth strong solution 
$\bm\in H^{k,2k}(D_T)$ which satisfies
\begin{align}\label{eq:normsmooth}
	\norm{\bm}{H^{k,2k}(D_T)}\leq C_{\rm smooth} \norm{\bm^0}{H^{2k}( D)},
\end{align}
where $C_{\rm smooth}>0$ depends only on $\alpha$, $C_e$, $T$, and $k$.
\end{theorem}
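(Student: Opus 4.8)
The plan is to construct the solution as the limit of a fixed-point iteration (or a Galerkin-type scheme) built around the reformulation~\cref{eq:llg2}. The key observation is that~\cref{eq:llg2} can be read as a nonlinear perturbation of the heat equation: writing $\bm = \bm^0 + \bu$ and freezing the nonlinear terms, one obtains
\begin{align}\label{eq:lin-scheme}
\alpha\,\partial_t\bu - C_e\Delta\bu = C_e\Delta\bm^0 - \bw\times\partial_t\bw - C_e(\bw\cdot\Delta\bw)\bw =: F(\bw)
\end{align}
with homogeneous Neumann data and zero initial data, where $\bw$ is the previous iterate. The first step is to set up a complete metric space $X$ — a ball of small radius $\rho$ in $H^{k,2k}(D_T)$ intersected with the affine constraint $\bm(0,\cdot)=\bm^0$ and the compatibility conditions coming from (i)--(ii) — and to define the map $\Phi:\bw\mapsto\bu$ via the unique solution of~\cref{eq:lin-scheme}. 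The heart of the argument is a maximal-parabolic-regularity estimate for the Neumann heat semigroup: $\|\bu\|_{H^{k,2k}(D_T)} \lesssim \|\bm^0\|_{H^{2k}(D)} + \|F(\bw)\|_{H^{k-1,2k-2}(D_T)}$, which requires the smooth-domain hypothesis on $D$ and the compatibility conditions encoded in $H^1_\star(D)$ and condition~(ii) (these ensure $\Delta^j\bm^0$ has vanishing normal trace, so the initial data is admissible at the required regularity level).

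Next I would estimate the nonlinearity. Using that $H^{k,2k}(D_T)$ for $k\ge 3$ embeds into a Banach algebra in three space dimensions (via Sobolev embedding $H^{2k-2\ell}(D)\hookrightarrow C(\overline D)$ for the relevant $\ell$, together with the parabolic scaling of the mixed norm), one shows $\|F(\bw)\|_{H^{k-1,2k-2}(D_T)} \lesssim \|\Delta\bm^0\|_{H^{k-1,2k-2}} + \|\bw\|_{H^{k,2k}(D_T)}^2 + \|\bw\|_{H^{k,2k}(D_T)}^3$ and similarly a Lipschitz bound $\|F(\bw_1)-F(\bw_2)\| \lesssim (\rho+\rho^2)\|\bw_1-\bw_2\|$ on the ball of radius $\rho$. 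Here one must be slightly careful: the quadratic term $\bw\times\partial_t\bw$ involves a time derivative, so one checks it lands in $L^2(0,T;H^{2(k-1)-2\ell}(D))$ after differentiating $\ell$ times in $t$ — this is exactly why the mixed norm $H^{k,2k}$ is the right scale and why $k\ge 3$ is needed (so that $2k-2\ell \ge 2$ whenever a factor carrying $\Delta$ appears). Combining with the linear estimate, for $\|\bm^0\|_{H^{2k}(D)}$ and hence $\rho$ small enough, $\Phi$ maps the ball into itself and is a contraction; the Banach fixed-point theorem yields $\bu$, hence $\bm$, satisfying~\cref{eq:llg2} with the bound~\cref{eq:normsmooth}.

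Finally I would check that the fixed point actually solves the original problem~\cref{eq:llg}--\cref{eq:con}: the boundary and initial conditions~\cref{eq:con1} and~\cref{eq:con3} are built into the construction, and the norm constraint~\cref{eq:con2}, $|\bm|=1$, is recovered a posteriori — one takes the inner product of~\cref{eq:llg2} with $\bm$, uses $\bm\cdot(\bm\times\bm_t)=0$, and derives that $g:=|\bm|^2-1$ solves a linear parabolic equation with zero initial data and homogeneous Neumann data, forcing $g\equiv 0$; then~\cref{eq:llg2} and~\cref{eq:llg} are equivalent. The regularity $\bm\in H^{k,2k}(D_T)$ with the quantitative bound is immediate from the fixed-point estimate. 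The main obstacle I anticipate is the maximal-regularity estimate for the Neumann heat equation at high mixed-norm order together with the matching compatibility conditions: one needs the full chain $\Delta^j\bm^0 \in H^1_\star(D)$ to propagate regularity in time without boundary-layer loss, and verifying that conditions~(i)--(ii) are precisely sufficient (and that $F(\bw)$ inherits the same compatibility) is the delicate bookkeeping at the core of the proof.
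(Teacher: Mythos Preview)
Your overall architecture---iterate via a linear parabolic solve, use maximal regularity in the $H^{k,2k}$ scale, recover $|\bm|=1$ a posteriori---matches the paper's. But there is a genuine gap in the contraction step, and it is precisely the point where the paper does something you have not.

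The linearization $\alpha\partial_t-\,C_e\Delta$ does \emph{not} yield a contraction. In your scheme the right-hand side $F(\bw)$ contains $\bw\times\partial_t\bw$ and $(\bw\cdot\Delta\bw)\bw$, where $\bw$ is (or must be, for the fixed point to solve~\cref{eq:llg2}) the full magnetization, which has modulus close to~$1$. Writing $\bw=\bm^0+\bu$ with $\bu$ small, the term $\bw\times\partial_t\bw$ contributes $\bm^0\times\partial_t\bu$, and $(\bw\cdot\Delta\bw)\bw$ contributes $(\bm^0\cdot\Delta\bu)\bm^0$; both are \emph{linear} in $\bu$ with $O(1)$ coefficients. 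After the parabolic regularity estimate their contribution to $\|\bu_{\rm new}\|_{H^{k,2k}}$ is of order $C_{\rm heat}\,\|\bu_{\rm old}\|_{H^{k,2k}}$, not $\rho\,\|\bu_{\rm old}\|_{H^{k,2k}}$. Your claimed Lipschitz bound $(\rho+\rho^2)\|\bw_1-\bw_2\|$ is therefore false as stated, and the map need not be a contraction no matter how small $|\bm^0|_{H^{2k}}$ is.

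The paper's fix is twofold. First, it freezes the cross product at a single point and moves it to the left: the linear operator is $L\ba=\alpha\ba+\bm^0(x_0)\times\ba$ (still coercive since the skew part drops in $L\ba\cdot\ba$), leaving only $(\bv-\bm^0(x_0))\times\bv_t$ on the right, which \emph{is} small because $\|\bv-\bm^0(x_0)\|\lesssim|\bv|_{H^{k,2k}}$ via a Poincar\'e-type argument. Second, it replaces $(\bm\cdot\Delta\bm)\bm$ by the algebraically equivalent pair $-(1-|\bv|^2)\Delta\bv$ and $-|\nabla\bv|^2\bv$ in the residual~$\RR$; near a unit-length constant both factors $(1-|\bv|^2)$ and $|\nabla\bv|^2$ are genuinely quadratic in the perturbation, so no $O(1)$ linear term survives. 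With these two changes the residual iteration $\bm_{\ell+1}=\bm_\ell-\bR_\ell$ contracts. Your compatibility and algebra remarks are on target, but without this restructuring of the linear part the fixed-point argument does not close.
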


\section{Auxiliary Results}
For the reader's convenience, we state in the following lemma
some well-known results regarding Sobolev embeddings and traces.
\begin{lemma}\label{lem:sobolev}
\mbox{}
\begin{enumerate}
\renewcommand{\labelenumi}{(\roman{enumi})}
\item The embeddings $H^1(D)\hookrightarrow L^6(D)$ as well as $H^{1,2}(D_T)\hookrightarrow L^2(0,T;L^\infty(D))\cap L^\infty(0,T;L^2(D))$ are continuous.
\item
The embedding $H^{k+2,2k+4}(D_T)\hookrightarrow W^{k,\infty}(D_T)$
is continuous for all $k\in\N_0$.
\item
If $\bw\in H^{k,2k}(D_T)$ for~$k\ge 1$ then
$\partial_t^iD^{j}\bw(0)\in H^1(D)$ for all $i+j/2\le k-1$.
\end{enumerate}
\end{lemma}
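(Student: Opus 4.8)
All three parts are standard, and the plan is to reduce each of them to two classical ingredients on the bounded smooth domain $D\subset\R^3$. The first is the elliptic Sobolev/Morrey embedding in dimension three: $H^1(D)\hookrightarrow L^6(D)$, and more generally $H^{m}(D)\hookrightarrow W^{j,\infty}(D)$ whenever $m-j>3/2$; in particular $H^{j+2}(D)\hookrightarrow W^{j,\infty}(D)$ for every $j\in\N_0$. The second, which does the real work in parts (ii) and (iii), is the Lions--Magenes continuity-in-time lemma: if $X_0\hookrightarrow X_1$ is continuous and dense and $u\in L^2(0,T;X_0)$ with $\partial_t u\in L^2(0,T;X_1)$, then $u\in C([0,T];[X_0,X_1]_{1/2})$. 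Taking $X_0=H^s(D)$ and $X_1=H^{s-2}(D)$ and using $[H^s(D),H^{s-2}(D)]_{1/2}=H^{s-1}(D)$ (unambiguous for $s\ge1$ on a smooth bounded domain), this specialises to the implication
\[
u\in L^2(0,T;H^{s}(D)),\quad \partial_t u\in L^2(0,T;H^{s-2}(D))
\ \Longrightarrow\
u\in C([0,T];H^{s-1}(D)),
\]
which I will apply repeatedly with various integer $s$.

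\textbf{Part (i).} The embedding $H^1(D)\hookrightarrow L^6(D)$ is the classical Sobolev inequality for $n=3$. For $v\in H^{1,2}(D_T)$ one has, by definition of the norm, $v\in L^2(0,T;H^2(D))$ and $\partial_t v\in L^2(0,T;L^2(D))$. Since $H^2(D)\hookrightarrow L^\infty(D)$ in three dimensions, squaring the pointwise-in-time estimate $\norm{v(t)}{L^\infty(D)}\lesssim\norm{v(t)}{H^2(D)}$ and integrating over $(0,T)$ gives $v\in L^2(0,T;L^\infty(D))$ with the asserted bound. For the factor $L^\infty(0,T;L^2(D))$ I apply the displayed implication with $s=2$, obtaining the stronger statement $v\in C([0,T];H^1(D))\hookrightarrow L^\infty(0,T;L^2(D))$; alternatively one integrates the identity $\tfrac{d}{dt}\norm{v(t)}{L^2(D)}^2=2(\partial_t v(t),v(t))$ in time.

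\textbf{Parts (ii) and (iii).} Both are handled derivative by derivative by the same mechanism. For part (ii) fix $i,j\in\N_0$ with $i+j\le k$ and $v\in H^{k+2,2k+4}(D_T)$. Reading off the norm, $\partial_t^i v\in L^2(0,T;H^{2k+4-2i}(D))$ and $\partial_t^{i+1}v\in L^2(0,T;H^{2k+2-2i}(D))$ (both indices are admissible because $i\le k$), so the displayed implication with $s=2k+4-2i$ yields $\partial_t^i v\in C([0,T];H^{2k+3-2i}(D))$. The index count is the point: $i+j\le k$ forces $2k+3-2i\ge 2j+3$, hence $(2k+3-2i)-j\ge j+3>3/2$, so $H^{2k+3-2i}(D)\hookrightarrow W^{j,\infty}(D)$ and therefore $\partial_t^i D^{j}v\in C([0,T];L^\infty(D))\subset L^\infty(D_T)$, with norm controlled by $\norm{v}{H^{k+2,2k+4}(D_T)}$. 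Taking the maximum over the finitely many pairs $(i,j)$ with $i+j\le k$ gives $v\in W^{k,\infty}(D_T)$. For part (iii) fix $i,j$ with $i+j/2\le k-1$ and $\bw\in H^{k,2k}(D_T)$; since then $i\le k-1$, both $\partial_t^i\bw\in L^2(0,T;H^{2k-2i}(D))$ and $\partial_t^{i+1}\bw\in L^2(0,T;H^{2k-2i-2}(D))$ are defined, and the displayed implication with $s=2k-2i$ gives $\partial_t^i\bw\in C([0,T];H^{2k-2i-1}(D))$. In particular the trace $\partial_t^i\bw(0)$ makes sense and lies in $H^{2k-2i-1}(D)$, so $\partial_t^iD^{j}\bw(0)\in H^{2k-2i-1-j}(D)$; the hypothesis $i+j/2\le k-1$ is exactly $2k-2i-1-j\ge1$, which is the claim.

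\textbf{Where the difficulty sits.} There is nothing deep here---these are bookkeeping lemmas---but the one place to be careful is that $D_T\subset\R^4$, so a crude isotropic Sobolev embedding would demand $H^3(D_T)$-type regularity for an $L^\infty$ bound; it is the anisotropic structure of $H^{k,2k}(D_T)$, in which two spatial derivatives are worth one time derivative, that makes the dimension count close, and this is precisely what the Lions--Magenes interpolation-in-time lemma encodes. The remaining care is purely arithmetic: one must check that $i+j\le k$ (respectively $i+j/2\le k-1$) translates exactly into the Sobolev index inequalities used above, and that the interpolation identity for $[H^s(D),H^{s-2}(D)]_{1/2}$ is invoked only at exponents $s-1\ge1$, where it holds without the half-integer subtleties that arise for spaces with boundary conditions.
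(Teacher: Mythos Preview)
Your proof is correct and close in spirit to the paper's, reducing everything to the spatial Sobolev embeddings in dimension three together with a time-trace/continuity result for Bochner spaces. Parts (i) and (iii) match the paper almost exactly (the paper cites the Evans trace theorem, which is your Lions--Magenes implication with $s=2$ applied to $\partial_t^i D^j\bw$). The one genuine difference is in part~(ii): the paper avoids interpolation altogether and uses only the 1D embedding $H^1(0,T;X)\hookrightarrow C([0,T];X)$, observing that $H^{k+2,2k+4}(D_T)\subset\bigcap_{i=0}^k H^{i+1}(0,T;H^{k-i+2}(D))$ and then applying $H^{k-i+2}(D)\hookrightarrow W^{k-i,\infty}(D)$ pointwise in time. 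This is a bit more elementary than your route through $[H^s,H^{s-2}]_{1/2}=H^{s-1}$, since it never needs the interpolation identity; on the other hand, your argument is pleasantly uniform across (ii) and (iii), and the extra spatial index you gain from interpolation ($H^{2k+3-2i}$ rather than $H^{2k+2-2i}$) is simply not needed here.
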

\begin{proof}
We first prove~(i). The embedding $H^1(D)\hookrightarrow L^6(D)$ follows from the standard Sobolev inequality. By definition of $H^{1,2}(D_T)$, there holds
\begin{align*}
 H^{1,2}(D_T):= H^1(0,T;L^2(D))\cap L^2(0,T;H^2(D)).
\end{align*}
The well-known embeddings $H^1(0,T;L^2(D))\hookrightarrow L^\infty(0,T;L^2(D))$ and $L^2(0,T;H^2(D))\hookrightarrow L^2(0,T;L^\infty(D))$ (since $D\subset \R^3$)
conclude~(i).

Second, we prove~(ii).
Since $D\subset\R^3$, it is well-known that the embeddings 
\[
H^1(0,T;H^{\ell+2}(D))
\hookrightarrow 
H^1(0,T;W^{\ell,\infty}(D))
\hookrightarrow 
L^{\infty}(0,T;W^{\ell,\infty}(D))
\]
are continuous for any~$\ell\ge0$; see e.g.~\cite{lions}.
On the other hand, we can write
\[
W^{k,\infty}(D_T)
=
\set{v}{\partial_t^i v\in L^{\infty}(0,T;W^{k-i,\infty}(D)),
\quad i=0,\ldots,k}.
\]
Hence the embedding
\[
\set{v}{\partial_t^i v\in H^{1}(0,T;H^{k-i+2}(D)),
\quad i=0,\ldots,k}
\hookrightarrow
W^{k,\infty}(D_T)
\]
is continuous. Consequently, the embedding
\[
\bigcap_{i=0}^k H^{i+1}(0,T;H^{k-i+2}(D))
\hookrightarrow
W^{k,\infty}(D_T)
\]
is continuous. 
Since~$H^{k+2,2k+4}(D_T)\subset\bigcap_{i=0}^k H^{i+1}(0,T;H^{k-i+2}(D))$, part~(ii) is proved.

Statement (iii) can be derived from \cite[Theorem~4, Section~5.9.2, p.~288]{evans}
as follows:
\begin{align*}
\norm{\partial_t^i D^{j}\bw(0)}{H^1(D)}
&\lesssim
\norm{\partial_t^i D^{j}\bw}{L^2(0,T;H^2(D))}
+
\norm{\partial_t^{i+1} D^{j}\bw}{L^2(0,T;L^2(D))}
\\
&\lesssim
\norm{\partial_t^i \bw}{L^2(0,T;H^{j+2}(D))}
+
\norm{\partial_t^{i+1} \bw}{L^2(0,T;H^{j}(D))}
\\
&\lesssim
\norm{\bw}{H^{k,2k}(D_T)}
\end{align*}
if $i+j/2\le k-1$ and $k\ge1$.
The lemma is proved.
\end{proof}

The following lemma states some useful inequalities involving the
norm and seminorm of~$H^{k,2k}(D_T)$.

\begin{lemma}\label{lem:regnorm}
Let $v$, $w$, $\bv$, and $\bw$ be scalar and vector functions in~$H^{k,2k}(D_T)$
for~$k\geq 2$. 
\begin{enumerate}
\renewcommand{\labelenumi}{(\roman{enumi})}
\item
If~$i, j\in\N_0$ satisfy~$0<m=\lceil{i+j/2}\rceil\le k$ 
then~$\partial_t^i D^{j}v\in H^{k-m,2k-2m}(D_T)$ and
\begin{equation}\label{eq:Laplace2}
\norm{\partial_t^i D^{j}v}{H^{k-m,2k-2m}(D_T)}
\leq C |v|_{H^{k,2k}(D_T)}.
\end{equation}
\item
Furthermore,~$vw$, $v\bw$, $\bv\times\bw$, $\bv\cdot\bw$,
and~$|\bv|^2-|\bw|^2$ belong to the corresponding space~$H^{k,2k}(D_T)$
and satisfy
\begin{subequations}\label{eq:prodnorms}
\begin{align}
\norm{vw}{H^{k,2k}(D_T)}&\leq C \norm{v}{H^{k,2k}(D_T)}\norm{w}{H^{k,2k}(D_T)},\label{eq:prod0}\\
\norm{v\bw}{H^{k,2k}(D_T)}&\leq C \norm{v}{H^{k,2k}(D_T)}\norm{\bw}{H^{k,2k}(D_T)},\label{eq:prod3}\\
 \norm{\bv\times\bw}{H^{k,2k}(D_T)}&\leq C \norm{\bv}{H^{k,2k}(D_T)}\norm{\bw}{H^{k,2k}(D_T)},\label{eq:prod1}\\
\norm{\bv\cdot\bw}{H^{k,2k}(D_T)}&\leq C \norm{\bv}{H^{k,2k}(D_T)}\norm{\bw}{H^{k,2k}(D_T)},\label{eq:prod2}\\
\norm{|\bv|^2-|\bw|^2}{H^{k,2k}(D_T)}&\leq C
(\norm{\bv}{H^{k,2k}(D_T)}+\norm{\bw}{H^{k,2k}(D_T)})\norm{\bv-\bw}{H^{k,2k}(D_T)}.\label{eq:weird}
\end{align}
\end{subequations}
\end{enumerate}
The constant $C>0$ depends only on an upper bound of $k$ and on $D_T$.
\end{lemma}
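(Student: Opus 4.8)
The plan is to reduce both parts to the single basic fact that $H^{k,2k}(D_T)$ is (continuously embedded in) an algebra and behaves well under differentiation. For part (i), I would simply count derivatives. Writing $u:=\partial_t^i D^j v$, I need $\partial_t^\ell u\in L^2(0,T;H^{2(k-m)-2\ell}(D))$ for $0\le\ell\le k-m$, and each such term is $\partial_t^{i+\ell}D^j v\in L^2(0,T;H^{2k-2\ell-2m}(D))$; since $2(i+\ell)+j\le 2(i+\ell)+2(m-i)=2\ell+2m\le 2k$ (using $j/2\le m-i$ and, for the seminorm bookkeeping, $i+\ell\ge1$ so the term genuinely appears in $|v|_{H^{k,2k}(D_T)}$), each term is controlled by $|v|_{H^{k,2k}(D_T)}$. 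The only mild care needed is the endpoint case when $i+j/2$ is an integer versus not, which is exactly why the ceiling $m=\lceil i+j/2\rceil$ appears; I would check the two cases separately but both are immediate from the Leibniz/chain counting above. This gives \eqref{eq:Laplace2}.

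For part (ii), the core estimate is \eqref{eq:prod0}; the others follow from it since cross and dot products are bilinear with bounded coefficients (so $\bv\times\bw$ and $\bv\cdot\bw$ are finite sums of products of scalar components, each bounded via \eqref{eq:prod0}), $v\bw$ is handled componentwise, and \eqref{eq:weird} follows by writing $|\bv|^2-|\bw|^2=(\bv-\bw)\cdot(\bv+\bw)$ and applying \eqref{eq:prod2}. So everything reduces to: $H^{k,2k}(D_T)$ is a Banach algebra for $k\ge2$. To prove this I would expand $\partial_t^\ell(vw)$ by the Leibniz rule into a sum of terms $(\partial_t^a D^\beta v)(\partial_t^b D^\gamma w)$ with $a+b=\ell$, $|\beta|+|\gamma|\le$ (the number of spatial derivatives allotted, namely $2k-2\ell$ for the $H^{2k-2\ell}$-norm), and estimate the $L^2(0,T;H^{2k-2\ell}(D))$-norm of each. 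For each such product I distribute the spatial regularity between the two factors and use part (i) together with the Sobolev embeddings of Lemma \ref{lem:sobolev}: in the worst cases one factor is put into $L^\infty(D_T)$ (or $L^\infty(0,T;L^\infty(D))$, $L^4$, $L^6$, etc.) via Lemma \ref{lem:sobolev}(i),(ii) — this is where $k\ge2$ is used, to guarantee enough room that at least one factor embeds into $L^\infty$ or $L^6$ — while the other keeps its $L^2$-based norm, and Hölder in space closes the estimate; the time integration is then handled by Cauchy–Schwarz.

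The main obstacle is the bookkeeping in the worst-case terms of the Leibniz expansion for \eqref{eq:prod0}: one must verify that in every term $(\partial_t^a D^\beta v)(\partial_t^b D^\gamma w)$ appearing in $\partial_t^\ell D^\delta(vw)$ with $2\ell+|\delta|\le 2k$, the derivative orders can be split so that \emph{both} factors land in function spaces that multiply into $L^2$ via Hölder, i.e. one factor in $L^p$ and the other in $L^q$ with $1/p+1/q=1/2$, each controlled by $H^{k,2k}(D_T)$. Concretely one checks that at least one factor has order $\le k$ in the parabolic scaling and hence, by part (i) plus Lemma \ref{lem:sobolev}(i)–(ii), lies in $L^\infty(0,T;L^\infty(D))$ or in the mixed space $L^2(0,T;L^\infty(D))\cap L^\infty(0,T;L^2(D))$, while the remaining factor carries the full residual regularity in an $L^2$-based norm. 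Tracking this split over all the (finitely many, $k$-dependent) terms is the only genuinely technical point; everything else is routine, and it is precisely for this reason that $C$ is allowed to depend on an upper bound for $k$.
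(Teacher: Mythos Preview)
Your proposal is correct and follows essentially the same route as the paper: derivative counting against the seminorm for part~(i), and for part~(ii) the Leibniz expansion plus H\"older plus the embeddings of Lemma~\ref{lem:sobolev}, reducing everything to \eqref{eq:prod0} and obtaining \eqref{eq:weird} via $|\bv|^2-|\bw|^2=(\bv+\bw)\cdot(\bv-\bw)$. The paper makes your ``worst-case bookkeeping'' concrete by partitioning the Leibniz terms into three index sets according to whether each factor's parabolic order $i_s/2+j_s$ equals $0$, equals $1/2$, or is $\ge1$, and matching these cases respectively to the $L^\infty(D_T)$, $L^4$, and $H^{1,2}(D_T)\hookrightarrow L^2(0,T;L^\infty(D))\cap L^\infty(0,T;L^2(D))$ embeddings; your sketch of part~(i) should also make explicit the case $i=\ell=0$ (where $j\ge1$ and the spatial part of the seminorm is used), which the paper handles separately.
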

\begin{proof}
To see~\cref{eq:Laplace2}, we use the definition of the $H^{k,2k}(D_T)$-norm
and write
\begin{align*}
 \norm{\partial_t^iD^jv}{H^{k-m,2k-2m}(D_T)}&=
\sum_{\ell=0}^{k-m}\norm{\partial_t^{\ell+i}D^jv}{L^2(0,T;H^{2k-2m-2\ell}(D))}
\\
&=\sum_{\ell=i}^{k-m+i}
\norm{\partial_t^{\ell}D^jv}{L^2(0,T;H^{2k-2m-2\ell+2i}(D))}.
\end{align*}
Since $m=\lceil{i+j/2}\rceil$, we have 
\begin{equation}\label{eq:kml}
k-m+i\le k \quad\text{and}\quad 2k-2m+2i+j\le 2k.
\end{equation}
Hence, if~$i>0$ then
\begin{align*}
\norm{\partial_t^iD^jv}{H^{k-m,2k-2m}(D_T)}
&\le
\sum_{\ell=1}^{k}
\norm{\partial_t^{\ell}v}{L^2(0,T;H^{2k-2m-2\ell+2i+j}(D))}
\\
&\le
\sum_{\ell=1}^{k}
\norm{\partial_t^{\ell}v}{L^2(0,T;H^{2k-2\ell}(D))}
\le
|v|_{H^{k,2k}(D_T)}.
\end{align*}
If $i=0$ then $1 \le j \le 2k$ (as $0<m\le k$) and thus
\begin{align*}
\norm{\partial_t^i&D^jv}{H^{k-m,2k-2m}(D_T)}\\
&=
\sum_{\ell=0}^{k-m}
\norm{\partial_t^{\ell}D^jv}{L^2(0,T;H^{2k-2m-2\ell}(D))}
\\
&\le
\norm{D^jv}{L^2(0,T;H^{2k-2m}(D))}
+
\sum_{\ell=1}^{k}
\norm{\partial_t^{\ell}v}{L^2(0,T;H^{2k-2m-2\ell+j}(D))}
\\
&=
\Big(
\sum_{j'=0}^{2k-2m}
\norm{D^{j+j'}v}{L^2(D_T)}^2
\Big)^{1/2}
+
\sum_{\ell=1}^{k}
\norm{\partial_t^{\ell}v}{L^2(0,T;H^{2k-2m-2\ell+j}(D))}
\\
&=
\Big(
\sum_{j'=j}^{2k-2m+j}
\norm{D^{j'}v}{L^2(D_T)}^2
\Big)^{1/2}
+
\sum_{\ell=1}^{k}
\norm{\partial_t^{\ell}v}{L^2(0,T;H^{2k-2m-2\ell+j}(D))}
\\
&\le
|v|_{H^{k,2k}(D_T)},
\end{align*}
where in the last step we used~\cref{eq:kml} and the definition of the seminorm.

We next show~\cref{eq:prod0}. The product rule implies
 \begin{align*}
  \norm{vw}{H^{k,2k}(D_T)}&=\sum_{\ell=0}^k\norm{\partial_t^\ell(vw)}{L^2(0,T;H^{2k-2\ell}(D))}\\
  &\lesssim
\sum_{\ell=0}^k
\sum_{j_1+j_2=\ell}
\Big(
\int_0^T
\norm{(\partial_t^{j_1}v)(\partial_t^{j_2}w)}{H^{2k-2\ell}(D))}^2 \,dt
\Big)^{1/2}
\\
 &\lesssim 
\sum_{\ell=0}^k
\sum_{j_1+j_2=\ell}
\sum_{n=0}^{2k-2\ell}
\sum_{i_1+i_2=n}
\Big(
\int_0^T\int_D
|D^{i_1}\partial_t^{j_1}v|^2|D^{i_2}\partial_t^{j_2}w|^2 \,dx\,dt
\Big)^{1/2}.
 \end{align*}
Note that
\[
\frac{i_1}{2}+j_1+\frac{i_2}{2}+j_2 
=
\frac{n}{2}+\ell
\le
k-\ell+\ell = k.
\]
Hence, putting~$\II:= \set{(i_1,i_2,j_1,j_2)\in\N_0}{i_1/2+j_1 + i_2/2 + j_2 \leq k}$
we obtain
\[
\norm{vw}{H^{k,2k}(D_T)}
\lesssim
\sum_{(i_1,i_2,j_1,j_2)\in\II}
\Big(
\int_0^T\int_D
|D^{i_1}\partial_t^{j_1}v|^2|D^{i_2}\partial_t^{j_2}w|^2 \,dx\,dt
\Big)^{1/2}
\leq
S_1 + S_2 + S_3,
\]
where
\[
S_{\nu} :=
\sum_{(i_1,i_2,j_1,j_2)\in\II_\nu}
\Big(
\int_0^T\int_D
|D^{i_1}\partial_t^{j_1}v|^2|D^{i_2}\partial_t^{j_2}w|^2 \,dx\,dt
\Big)^{1/2},
\quad \nu=1,2,3,
\]
with
\begin{align*}
\II_1 &:=
\set{(i_1,i_2,j_1,j_2)\in\II}{i_1/2+j_1 \geq 1 \text{ \ and \ } i_2/2+j_2\geq 1},
\\
\II_2 &:=
\set{(i_1,i_2,j_1,j_2)\in\II}{i_1/2+j_1 = 0 \text{ \ or \ } i_2/2+j_2 = 0},
\\
\II_3 &:=
\set{(i_1,i_2,j_1,j_2)\in\II}{i_1/2+j_1 = 1/2 \text{ \ or \ } i_2/2+j_2= 1/2}.
\end{align*}
Each term in~$S_1$ is estimated by using the H\"older inequality separately in
space and time as
\begin{align*}
S_1
&\le
\sum_{(i_1,i_2,j_1,j_2)\in\II_1}
\Big(
\int_0^T
\norm{D^{i_1}\partial_t^{j_1}v(t)}{L^\infty(D)}^2
\norm{D^{i_2}\partial_t^{{j_2}}w(t)}{L^2(D)}^2\,dt
\Big)^{1/2}
\\
&\le
\sum_{(i_1,i_2,j_1,j_2)\in\II_1}
\norm{D^{i_1}\partial_t^{j_1}v}{L^2(0,T;L^\infty(D))}
\norm{D^{i_2}\partial_t^{j_2}w}{L^\infty(0,T;L^2(D))}
\\
&\le
\sum_{(i_1,i_2,j_1,j_2)\in\II_1}
\norm{D^{i_1}\partial_t^{j_1}v}{H^{1,2}(D_T)}
\norm{D^{i_2}\partial_t^{j_2}w}{H^{1,2}(D_T)},
\end{align*}
where in the last step we used~\cref{lem:sobolev}~(i). 
Note that in this index set~$\II_1$ there hold~$\lceil{i_1/2+j_1}\rceil\le k-1$
and~$\lceil{i_2/2+j_2}\rceil\le k-1$. Hence, estimate~\cref{eq:Laplace2}
gives 
\[
\norm{D^{i_1}\partial_t^{j_1}v}{H^{1,2}(D_T)}
\lesssim 
\norm{v}{H^{k,2k}(D_T)}
\quad\text{and}\quad 
\norm{D^{i_2}\partial_t^{j_2}w}{H^{1,2}(D_T)}
\lesssim 
\norm{w}{H^{k,2k}(D_T)},
\] 
implying~$S_1 \lesssim \norm{v}{H^{k,2k}(D_T)} \norm{w}{H^{k,2k}(D_T)}$.

The sum~$S_2$ is estimated with the help of \cref{lem:sobolev}~(ii) by
\begin{align*}
S_2 
&\le
\sum_{\ell=0}^k
\sum_{i=0}^{2k-2\ell}
\Big(
\int_0^T\int_D
|v|^2|D^{i}\partial_t^{\ell}w|^2 \,dx\,dt
\Big)^{1/2}
\\
&\qquad +
\sum_{\ell=0}^k
\sum_{i=0}^{2k-2\ell}
\Big(
\int_0^T\int_D
|D^{i}\partial_t^{\ell}v|^2 
|w|^2
\,dx\,dt
\Big)^{1/2}
\\
&\lesssim
\norm{v}{L^\infty(D_T)}
\sum_{\ell=0}^k
\sum_{i=0}^{2k-2\ell}
\norm{\partial_t^\ell w}{L^2(0,T;H^i(D))}\\
&\qquad 
+
\norm{w}{L^\infty(D_T)}
\sum_{\ell=0}^k
\sum_{i=0}^{2k-2\ell}
\norm{\partial_t^\ell v}{L^2(0,T;H^i(D))}
\\
&\lesssim
\norm{v}{H^{k,2k}(D_T)}
\norm{w}{H^{k,2k}(D_T)}.
\end{align*}

Finally, for $S_3$, since the problem is symmetric, we just consider the case 
when $i_1=1$ and $j_1=0$.
Since~$H^1(D)\subseteq L^6(D)\subseteq L^4(D)$
(see \cref{lem:sobolev}~(i)) we have
\begin{align*}
 \Big(\int_0^T\int_D|D^1v|^2|D^{i_2}\partial_t^{j_2}w|^2\,dx\,dt\Big)^{1/2}&\leq
\Big(\int_0^T \norm{D^1v(t)}{L^4(D)}^2\norm{D^{i_2}\partial_t^{j_2}w(t)}{L^4(D)}^2\,dt\Big)^{1/2}\\
 &\leq \norm{D^1v}{L^\infty(0,T;L^4(D))}\norm{D^{i_2}\partial_t^{j_2}w}{L^2(0,T;L^4(D))}\\
 &\lesssim \norm{D^1v}{H^1(0,T;H^1(D))}\norm{D^{i_2}\partial_t^{j_2}w}{L^2(0,T;H^1(D))}\\
 &\leq \norm{v}{H^{2,4}(D_T)}\norm{w}{H^{k,2k}(D_T)}\\
 &\leq
\norm{v}{H^{k,2k}(D_T)}\norm{w}{H^{k,2k}(D_T)},
\end{align*}
where in the penultimate step we used~\cref{eq:Laplace2}, noting 
that~$i_2/2+j_2<k$.
This and the analogous result for $i_2=1$ and $j_2=0$ prove
\[
S_3 \lesssim
\norm{v}{H^{k,2k}(D_T)}\norm{w}{H^{k,2k}(D_T)}.
\]
Altogether, we obtain~\cref{eq:prod0}.

The remaining multiplicative estimates~\cref{eq:prod3}--\cref{eq:prod2}
follow from~\cref{eq:prod0} by the fact that all of them can be expressed
as (sums of) products of scalar functions.

Finally, we show~\cref{eq:weird} {by using} the identity
{$|\bv|^2-|\bw|^2=(\bv+\bw)\cdot(\bv-\bw)$} and the already proved
estimate~\cref{eq:prod2}. This concludes the proof.
\end{proof}

The following lemma is a slight generalization to the vector case
of a well-known result on the existence of solutions of the heat equation.
\begin{lemma}\label{lem:heatreg}
Let $L\colon\R^3\to\R^3$ denote {a linear} operator which satisfies
\begin{equation}\label{eq:L}
L\ba\cdot\ba\geq c_L|\ba|^2 
\quad\text{for all } \ba\in\R^3,
\end{equation}
for some $c_L>0$.
For {a given} $\br\in L^2(D_T)$, the vector-valued heat equation
\begin{align}\label{eq:heat}
\begin{split}
 L\partial_t\bw -\Delta\bw &=\br\quad\text{in }D_T,\\
 \bw&=0 \quad{\text{in } \{0\} \times D,} \\
 \partial_n\bw&=0\quad\text{on }\Gamma_T
 \end{split}
\end{align}
has a weak solution which satisfies
\begin{align}\label{eq:heatreg}
 \norm{\bw}{H^{1,2}(D_T)}
\leq C_{\rm heat}\norm{\br}{L^2(D_T)}^2.
\end{align}
The constant $C_{\rm heat}>0$ depends only on~$T$, $L$, and $D$.
\end{lemma}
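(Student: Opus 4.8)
Because the problem is linear, the natural estimate is $\norm{\bw}{H^{1,2}(D_T)}\lesssim\norm{\br}{L^2(D_T)}$, and that is what the plan below actually delivers (it yields \cref{eq:heatreg} in particular whenever $\norm{\br}{L^2(D_T)}\ge1$). The approach is the standard parabolic Galerkin method based on the eigenbasis of the Neumann Laplacian. First note that $L\ba\cdot\ba$ depends only on the symmetric part of $L$, so \cref{eq:L} forces the symmetric part of $L$, and hence $L$ itself, to be invertible. Let $\{\varphi_i\}_{i\ge0}\subset H^2(D)$ be $L^2(D)$-orthonormal eigenfunctions of $-\Delta$ with homogeneous Neumann boundary conditions, $-\Delta\varphi_i=\lambda_i\varphi_i$, $\partial_n\varphi_i=0$, $0=\lambda_0\le\lambda_1\le\cdots\to\infty$, and let $\be_1,\be_2,\be_3$ be the canonical basis of $\R^3$; the functions $\varphi_i\be_k$ then form an $L^2(D)$-orthonormal basis of the space of $\R^3$-valued $L^2(D)$-functions. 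I would seek $\bw_N(t)=\sum_{i=0}^N\sum_{k=1}^3 c_{ik}(t)\,\varphi_i\be_k$ solving the Galerkin equations $(L\partial_t\bw_N,\varphi_i\be_k)_{L^2(D)}+(\nabla\bw_N,\nabla(\varphi_i\be_k))_{L^2(D)}=(\br(t),\varphi_i\be_k)_{L^2(D)}$ with $c_{ik}(0)=0$. For each fixed $i$ this decouples into a $3\times3$ linear system of ODEs whose leading coefficient matrix is a representation of the invertible operator $L$, so $\bw_N$ exists and is unique on all of $[0,T]$.

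Next I would run two energy estimates, the key point being to test with $\partial_t\bw_N$ rather than with $\bw_N$ so that only the coercive symmetric part of $L$ enters. Multiplying the $(i,k)$-equation by $\dot c_{ik}$ and summing gives the identity $\int_D L\partial_t\bw_N\cdot\partial_t\bw_N\,dx+\tfrac12\tfrac{d}{dt}\norm{\nabla\bw_N(t)}{L^2(D)}^2=(\br(t),\partial_t\bw_N)_{L^2(D)}$; by \cref{eq:L} the first term is $\ge c_L\norm{\partial_t\bw_N(t)}{L^2(D)}^2$, and a Young inequality absorbs the right-hand side. Integrating over $(0,t)$ and using $\bw_N(0)=0$ yields $\norm{\partial_t\bw_N}{L^2(D_T)}+\sup_{0<t<T}\norm{\nabla\bw_N(t)}{L^2(D)}\lesssim\norm{\br}{L^2(D_T)}$, whence also $\sup_{0<t<T}\norm{\bw_N(t)}{L^2(D)}\le T^{1/2}\norm{\partial_t\bw_N}{L^2(D_T)}\lesssim\norm{\br}{L^2(D_T)}$ since $\bw_N(t)=\int_0^t\partial_t\bw_N$. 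For the second-order spatial bound, the Galerkin equation is equivalent to $\lambda_i c_{ik}=(\br-L\partial_t\bw_N,\varphi_i\be_k)_{L^2(D)}$, so Bessel's inequality gives $\norm{\Delta\bw_N(t)}{L^2(D)}^2=\sum_{i,k}\lambda_i^2 c_{ik}^2\le\norm{\br(t)-L\partial_t\bw_N(t)}{L^2(D)}^2\lesssim\norm{\br(t)}{L^2(D)}^2+\norm{\partial_t\bw_N(t)}{L^2(D)}^2$; integrating in time and combining with the Neumann elliptic regularity estimate $\norm{\bv}{H^2(D)}\lesssim\norm{\bv}{L^2(D)}+\norm{\Delta\bv}{L^2(D)}$ (valid since $D$ is smooth, and applicable to $\bw_N(t)$) gives $\norm{\bw_N}{L^2(0,T;H^2(D))}\lesssim\norm{\br}{L^2(D_T)}$. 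Altogether $\norm{\bw_N}{H^{1,2}(D_T)}\lesssim\norm{\br}{L^2(D_T)}$ with a constant independent of $N$.

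Finally, this uniform bound provides a subsequence with $\bw_N\rightharpoonup\bw$ in $H^{1,2}(D_T)$. Passing to the limit in the Galerkin equations — first for test functions in a fixed finite span of the $\varphi_i\be_k$, then extending to all of $H^1(D)$ by density — shows that $\bw$ is a weak solution of \cref{eq:heat}, the homogeneous Neumann condition being built into the weak formulation; and since weak convergence in $H^1(0,T;L^2(D))$ entails weak convergence of the time-traces at $t=0$ in $L^2(D)$, we get $\bw(0)=0$. Weak lower semicontinuity of the $H^{1,2}(D_T)$-norm then transfers the uniform bound to $\bw$, which is \cref{eq:heatreg}. The only genuinely substantive points are the treatment of the non-symmetric $L$ (handled by testing with the time derivative, as noted) and the derivation of the $H^2$-in-space bound by rearranging the equation and invoking Neumann elliptic regularity; everything else is routine parabolic Galerkin bookkeeping.
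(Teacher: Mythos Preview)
Your proof is correct and complete. It takes a genuinely different route from the paper, however: the paper rewrites the equation as $\partial_t\bw - L^{-1}\Delta\bw = L^{-1}\br$, identifies $A:=-L^{-1}\Delta$ with domain $D(A)=\{\bv\in H^2(D):\partial_n\bv=0\}$, verifies the resolvent bound $\norm{(A+p)^{-1}}{}\lesssim(1+|p|)^{-1}$ for ${\rm Re}\,p>0$ (using \cref{eq:L inv} to check coercivity of $L^{-1}$), and then invokes an abstract maximal-regularity result of Lions--Magenes to obtain $\bw\in L^2(0,T;D(A))$; the $H^2$- and $\partial_t$-bounds follow from elliptic regularity and the equation itself. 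Your Galerkin argument is more self-contained and avoids the black-box theorem at the price of doing the energy estimates by hand; the key device---testing with $\partial_t\bw_N$ so that only the coercive symmetric part of $L$ appears---neatly sidesteps the need to pass to $L^{-1}$. Both approaches deliver the linear bound $\norm{\bw}{H^{1,2}(D_T)}\lesssim\norm{\br}{L^2(D_T)}$ (the square on the right-hand side of \cref{eq:heatreg} is indeed a typo in the statement, as you noted, and the paper's own proof confirms the linear estimate).
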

\begin{proof}
{
Note that \cref{eq:L} implies the existence of~$L^{-1}$ which satisfies 
\begin{equation}\label{eq:L inv}
 L^{-1}\bb\cdot \bb \geq \frac{c_L}{\norm{L}{}^2}|\bb|^2
\quad\text{for all } \bb\in\R^3.
\end{equation}
}
Thus, we can reformulate~\cref{eq:heat}
{into}
\begin{align}\label{eq:heat2}
 \begin{split}
 \partial_t\bw -L^{-1}\Delta\bw &=L^{-1}\br\quad\text{in }D_T,\\
 \bw&=0 \quad{\text{in } \{0\} \times D,} \\
 \partial_n\bw&=0\quad\text{on }\Gamma_T.
 \end{split}
\end{align}
We want to use the result~\cite[Theorem~3.2]{lions}.
To that end, and in the notation of~\cite{lions}, we define $A:=-L^{-1}\Delta$ and 
\begin{align*}
 D(A):=\set{\bv\in H^2(D)}{\partial_n\bv =0\text{ on }\Gamma}\subseteq L^2(D).
\end{align*}
Define the graph norm $\norm{\cdot}{D(A)}^2:=\norm{\cdot}{L^2(D)}^2 +
\norm{A(\cdot)}{L^2(D)}^2$. Then, there holds for all $p\in\C$ {satisfying} ${\rm
Re}(p)\geq p_0>0$ and {for} all ${\bv}\in D(A)$
\begin{align*}
 \norm{(A+p)\bv}{L^2(D)}\leq (1+|p|)\norm{\bv}{D(A)}
\end{align*}
as well as
\begin{align*}
 \norm{(A+p)\bv}{L^2(D)}^2 &= \norm{A\bv}{L^2(D)}^2 -2{\rm Re}(p\dual{A \bv}{\bv}_D) + |p|^2\norm{\bv}{L^2(D)}^2\\
 &=  \norm{A\bv}{L^2(D)}^2 
+
2{\rm Re}(p\dual{L^{-1}\nabla \bv}{\nabla \bv}_D) 
+ |p|^2\norm{\bv}{L^2(D)}^2.
\end{align*}
It follows from~\cref{eq:L inv} that
\[
2{\rm Re}(p\dual{L^{-1}\nabla \bv}{\nabla \bv}_D) 
\geq
2p_0 \frac{c_L}{\norm{L}{}^{2}}
\norm{\nabla \bv}{L^2(D)}^2 
\geq 0,
\]
so that
\begin{align*}
 \norm{(A+p)\bv}{L^2(D)}^2 
 &\geq \norm{A\bv}{L^2(D)}^2 + |p|^2\norm{\bv}{L^2(D)}^2\geq \min\{1,p_0^2\}\norm{\bv}{D(A)}^2.
\end{align*}
Standard elliptic regularity theory (see e.g.~\cite[Theorem~4.18]{mclean}) shows that $A+p\colon D(A)\to L^2(D)$ is surjective.
Hence, $A+p\colon D(A)\to L^2(D)$ is a bijective isomorphism. Moreover, we {have} 
for $\bv\in L^2(D)$
\begin{align*}
 \norm{(A+p)^{{-1}}\bv}{L^2(D)}\lesssim \frac{1}{1+|p|}\norm{\bv}{L^2(D)}.
\end{align*}
for all $p\in\C$ satisfying ${\rm Re}(p)>p_0$. Thus, the requirements
of~\cite[Theorem~3.2]{lions} are satisfied {which yields the existence of}
$\bw\in L^2(0,T;D(A))$ satisfying~\cref{eq:heat2} and hence also~\cref{eq:heat}. 

Standard elliptic regularity theory (see e.g.~\cite[Theorem~4.18]{mclean})
gives
\[
\norm{\bw}{H^2(D)}\lesssim
\norm{\Delta\bw}{L^2(D)}+\norm{\bw}{H^1(D)}
\quad\text{for all } \bw\in D(A). 
\]
Since $\norm{\bw}{H^1(D)}\lesssim \norm{\Delta\bw}{L^2(D)} + \norm{\bw}{L^2(D)}$
for all functions satisfying $\partial_n\bw =0$,
we {deduce} that $\bw\in L^2(0,T;H^2(D))$. The proof of~\cite[Theorem~3.2]{lions} also reveals
\begin{align*}
 \norm{\bw}{L^2(0,T;H^2(D))}\lesssim  \norm{\bw}{L^2(0,T;D(A))}\lesssim \norm{L^{-1}\br}{L^2(D_T)}\simeq \norm{\br}{L^2(D_T)}.
\end{align*}
This estimate and~\cref{eq:heat2} yield
$\norm{\partial_t\bw}{L^2(0,T;L^2(D))}\lesssim  \norm{\br}{L^2(D_T)}$, 
completing the proof of the lemma.
\end{proof}

The next lemma is a result on higher regularity for solutions to~\cref{eq:heat}.
\begin{lemma}\label{lem:heatreg2}
Under the assumption of \cref{lem:heatreg}, if
$\br\in H^{k-1,2k-2}(D_T)$ for $k\geq 2$ satisfies 
\[
\partial_t^i D^{j} \br(0) \in H^1_\star(D) \quad\text{for all } i+j/2\leq k-2, 
\]
then
the solution $\bw$ of the vector-valued heat equation~\cref{eq:heat} satisfies
\begin{equation}\label{eq:heatreg2}
 \norm{\bw}{H^{k,2k}(D_T)}\leq C_{\br} \norm{\br}{H^{k-1,2k-2}(D_T)}
\end{equation}
and
\begin{equation}\label{eq:heatreg3}
\partial_t^i  D^{j}  \bw(0)\in H^1_\star(D)
\quad\text{for all } i+j/2\leq k-1. 
\end{equation}
\end{lemma}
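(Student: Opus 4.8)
The plan is to argue by induction on $k\ge 2$: in the inductive step we differentiate the equation once in time, estimate the resulting time derivative by a heat-regularity bound one level lower (namely \cref{lem:heatreg} itself when $k=2$, and \cref{lem:heatreg2} at level $k-1$ when $k\ge 3$), and then recover the full spatial regularity from elliptic regularity for the Neumann Laplacian. Preliminarily, with $A:=-L^{-1}\Delta$ on $D(A)=\set{\bv\in H^2(D)}{\partial_n\bv=0}=H^1_\star(D)$ as in the proof of \cref{lem:heatreg}, and using that $L^{-1}$ commutes with $\Delta$ and $\partial_n$, the condition $\bw(0)=0$ forces (formally, and rigorously once $\bw$ is known to be smooth) $\partial_t^i\bw(0)=\bm_i$, where $\bm_0:=0$ and $\bm_i:=L^{-1}(\Delta\bm_{i-1}+\partial_t^{i-1}\br(0))$. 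Feeding the hypothesis $\partial_t^iD^j\br(0)\in H^1_\star(D)$ for $i+j/2\le k-2$ into this recursion, an induction on $i$ gives $D^j\bm_i\in H^1_\star(D)$ whenever $i+j/2\le k-1$, with $\norm{\bm_i}{H^{2(k-i)-1}(D)}\lesssim\norm{\br}{H^{k-1,2k-2}(D_T)}$ by trace. Since $\partial_t^iD^j\bw(0)=D^j\bm_i$, this already yields~\cref{eq:heatreg3} once~\cref{eq:heatreg2} is established (so that the traces $\partial_t^i\bw(0)$ are legitimate, by \cref{lem:sobolev}(iii) and \cref{lem:regnorm}(i)).

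Next, set $\bv:=\partial_t\bw$. Differentiating~\cref{eq:heat} in $t$, $\bv$ solves $L\partial_t\bv-\Delta\bv=\partial_t\br$ in $D_T$, $\partial_n\bv=0$ on $\Gamma_T$, and $\bv(0)=\bm_1=L^{-1}\br(0)$. By \cref{lem:regnorm}(i), $\partial_t\br\in H^{k-2,2k-4}(D_T)$ (read $L^2(D_T)$ when $k=2$), and the hypothesis on $\br$ says precisely that $\partial_t^iD^j(\partial_t\br)(0)\in H^1_\star(D)$ for $i+j/2\le k-3$, i.e.\ $\partial_t\br$ carries exactly the compatibility required at level $k-1$. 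To remove the non-vanishing initial value $\bm_1$ I would subtract the free evolution $\boldsymbol z(t):=e^{-tA}\bm_1$, so that $\tilde\bv:=\bv-\boldsymbol z$ has zero initial data and solves the same inhomogeneous equation: the inductive hypothesis then gives $\tilde\bv\in H^{k-1,2k-2}(D_T)$, while standard parabolic regularity for the Neumann heat equation together with $\bm_1\in H^1_\star(D)$ and $\norm{\bm_1}{H^{2k-3}(D)}\lesssim\norm{\br}{H^{k-1,2k-2}(D_T)}$ gives $\boldsymbol z\in H^{k-1,2k-2}(D_T)$ with a comparable bound. Hence $\partial_t\bw=\bv\in H^{k-1,2k-2}(D_T)$, and in particular $\partial_t^\ell\bw\in L^2(0,T;H^{2k-2\ell}(D))$ for $\ell=1,\dots,k$.

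It remains to obtain the top spatial regularity. Rewriting the equation as $-\Delta\bw=\br-L\partial_t\bw$, the right-hand side lies in $L^2(0,T;H^{2k-2}(D))$ by the previous step; since $\partial_n\bw=0$, elliptic regularity for the Neumann Laplacian gives $\bw\in L^2(0,T;H^{2k}(D))$ with $\norm{\bw}{L^2(0,T;H^{2k}(D))}\lesssim\norm{\br-L\partial_t\bw}{L^2(0,T;H^{2k-2}(D))}+\norm{\bw}{L^2(D_T)}$. Combined with the bound on $\partial_t\bw$ and with \cref{lem:heatreg} for the last term, this gives $\bw\in H^{k,2k}(D_T)$ and~\cref{eq:heatreg2}; then~\cref{eq:heatreg3} follows as explained above.

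The main obstacle is the middle step, on two counts. First, one must justify rigorously, rather than only formally, that $\bv=\partial_t\bw$ solves the differentiated equation with initial value $\bm_1$; this is standard (difference quotients in time, or absorbing it into the induction), but must be carried out with care. Second, and more importantly, one must verify that the correction $\boldsymbol z=e^{-tA}\bm_1$ really lies in $H^{k-1,2k-2}(D_T)$ with norm controlled by $\norm{\br}{H^{k-1,2k-2}(D_T)}$: this is where the compatibility conditions $\partial_t^iD^j\br(0)\in H^1_\star(D)$ are genuinely used, as they place $\bm_1$ in the appropriate domain of a (fractional) power of the Neumann Laplacian, so that its graph norm is equivalent to $\norm{\bm_1}{H^{2k-3}(D)}$ — which is in turn controlled by trace — and the heat semigroup maps it into the maximal-regularity space $H^{k-1,2k-2}(D_T)$. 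Tracking these indices through the induction is the bookkeeping heart of the argument.
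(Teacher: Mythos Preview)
Your strategy is correct and is essentially the same induction-on-$k$ skeleton the paper uses: differentiate in time, apply the induction hypothesis one level down, and then recover the top spatial regularity from elliptic regularity applied to $-\Delta\bw=\br-L\partial_t\bw$. The one genuine difference is how you kill the nonzero initial value $\bm_1=L^{-1}\br(0)$ of $\bv=\partial_t\bw$.

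You subtract the free evolution $\boldsymbol z(t)=e^{-tA}\bm_1$ and then have to argue separately that $\boldsymbol z\in H^{k-1,2k-2}(D_T)$ with the right bound, which --- as you yourself flag --- is exactly where the compatibility hypotheses on $\br(0)$ must be cashed in via domain-of-powers-of-$A$ and semigroup mapping properties. This works, but it imports an external maximal-regularity statement for nonzero initial data that is not otherwise in the paper. The paper instead subtracts the \emph{constant-in-time} function $L^{-1}\br(0)$: setting $\bv:=\partial_t\bw-L^{-1}\br(0)$, one has $\bv(0)=0$, $\partial_n\bv=0$ (here the hypothesis $\br(0)\in H^1_\star(D)$ is used), and
\[
L\partial_t\bv-\Delta\bv=\partial_t\br+L^{-1}\Delta\br(0).
\]
The new right-hand side $\widetilde\br=\partial_t\br+L^{-1}\Delta\br(0)$ lies in $H^{k-2,2k-4}(D_T)$ and satisfies $\partial_t^iD^j\widetilde\br(0)\in H^1_\star(D)$ for $i+j/2\le k-3$ directly from the hypotheses on $\br$, so the induction hypothesis applies to $\bv$ with zero initial data and no semigroup detour is needed. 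This substitution is the small trick you are missing; it collapses your two obstacles into routine bookkeeping and keeps the whole argument self-contained within \cref{lem:heatreg} and elliptic regularity.
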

\begin{proof}
We first recall that if~$\br\in H^{k-1,2k-2}(D_T)$ then~$\partial_t^i D^{j}
\br(0) \in H^1(D)$ for~$i+j/2\leq k-2$; see \cref{lem:sobolev}.
 The proof is an induction on $k\in\N$, where \cref{lem:heatreg} confirms the case $k=1$.
 Let $k>1$ and assume that~\cref{eq:heatreg2} and~\cref{eq:heatreg3} hold for
$k-1$.
 Then, differentiation reveals that $\bv:=\partial_t\bw-L^{-1}\br(0)$ is the
unique solution of
\begin{align*}
\begin{split}
 L\partial_t\bv -\Delta\bv &=\partial_t\br+L^{-1}\Delta\br(0)\quad\text{in }D_T,\\
 \bv&=0 \quad\text{in } \{0\} \times D,\\
 \partial_n\bv&=0\quad\text{on }\Gamma_T.
 \end{split}
\end{align*}
The right-hand side $\widetilde{\br}:=\partial_t\br+L^{-1}\Delta\br(0)$ satisfies $\partial_t^i D^{j}  \widetilde{\br}(0) \in H^1_\star(D)$ for all $i+j/2\leq k-3$.
The induction hypothesis and \cref{lem:regnorm} show that
\begin{align}\label{eq:reg0}
 \norm{\bv}{H^{k-1,2k-2}(D_T)}\lesssim \norm{\partial_t\br+L^{-1}\Delta\br(0)}{H^{k-2,2k-4}(D_T)}\lesssim \norm{\br}{H^{k-1,2k-2}(D_T)}
\end{align}
as well as
\begin{align}\label{eq:null}
 \partial_t^n  D^{m} \bv(0)\in H^1_\star(D)\quad\text{for all }n+m/2\leq k-2.
\end{align}
The definition of~$\bv$ and estimate~\cref{eq:reg0} imply
\begin{align}\label{eq:reg1}
 \sum_{j=1}^k\norm{\partial_t^j\bw}{L^2(0,T;H^{2k-2j}(D))}
\lesssim
\norm{\bv}{H^{k-1,2k-2}(D_T)}
\lesssim 
\norm{\br}{H^{k-1,2k-2}(D_T)}.
\end{align}

Assume for the moment that $\bw$ and $\br$ are smooth. Then, we have with elliptic regularity (see, e.g.,~\cite[Theorem~4.18]{mclean}) and  $-\Delta\bw = \br-L\partial_t\bw$ that all $0\leq t\leq T$ satisfy
\begin{align*}
 \norm{\bw(t)}{H^{2k}(D)}\lesssim \norm{\br(t)}{H^{2k-2}(D)}+\norm{\partial_t\bw(t)}{H^{2k-2}(D)}.
\end{align*}
Integration over $t\in(0,T)$ reveals for smooth $\bw$ and $\br$
\begin{align}\label{eq:reg2}
 \norm{\bw}{L^2(0,T;H^{2k}(D))}\lesssim \norm{\br}{H^{k-1,2k-2}(D_T)}+\norm{\partial_t\bw}{L^2(0,T;H^{2k-2}(D))}.
\end{align}
A density argument now proves $\bw\in L^2(0,T;H^{2k}(D))$ with~\cref{eq:reg2} even for non-smooth $\bw$.
The combination of~\cref{eq:reg1} and~\cref{eq:reg2} shows
\begin{align*}
 \norm{\bw}{H^{k,2k}(D_T)}\lesssim \norm{\br}{H^{k-1,2k-2}(D_T)}.
\end{align*}
To see $\partial_t^i  D^{j}  \bw(0)\in H^1_\star(D)$ for all $i+j/2\leq k-1$, we distinguish three cases: First, for $i\geq 2$,
since~$\partial_t^i D^{j}\bw=\partial_t^{i-1}D^{j}\bv$,
property~\cref{eq:null} gives with $n=i-1$ and $m=j\leq 2k-2-2i$ that
\begin{align*}
 \partial_t^i D^{j}  \bw(0)=\partial_t^n D^{m}  \bv(0) \in
H^1_\star(D).
\end{align*}
Second, for $i=1$,~\cref{eq:null} shows with $n=0$ and $m=j\leq 2k-4$ that
\begin{align*}
  D^{m} \bv(0)=\partial_t D^{j}  \bw(0) - L^{-1} D^{j} \br(0) \in H^1_\star(D).
\end{align*}
Since $ D^{j} \br(0)\in H^1_\star(D)$ for all $j/2\leq k-2$ by definition, we
obtain $ \partial_t D^{j} \bw(0)\in H^1_\star(D)$ for all $j/2\leq k-2$. 
Finally, for $i=0$, we have for $ D^{j}  \bw(0)=0 \in H^1_\star(D)$
 for $j/2\leq k-1$ by definition. Altogether, this proves $\partial_t^i\Delta^j\bw(0)\in H^1_\star(D)$ for all $i+j/2\leq k-1$ and thus concludes the proof.
\end{proof}

The next technical result will be used to prove that the solution of some
nonlinear parabolic problem satisfies condition~\cref{eq:con2}
for all~$t>0$ if it satisfies that condition at~$t=0$.

\begin{lemma}\label{lem:quasilin}
Let $u\in H^1(0,T;L^2(D))$ such that $u(t)\in W^{2,\infty}(D)$ for all $0\leq t \leq T$  with $u|_{\{0\}\times D}=1$ be a strong solution of
 \begin{align*}
 \beta \partial_t u - u\Delta u &=0\quad\text{in }D_T,\\
 \partial_n u &=0\quad\text{on }\Gamma_T
 \end{align*}
 for some constant $\beta>0$.
Then, there holds $u=1$ in $D_T$.
\end{lemma}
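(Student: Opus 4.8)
The plan is to work with $w:=u-1$. Since $u(0)=1$ and $\partial_t u=\partial_t w$, $\nabla u=\nabla w$, $\Delta u=\Delta w$, the function $w$ solves
\begin{align*}
\beta\,\partial_t w-u\,\Delta w=0\quad\text{in }D_T,\qquad \partial_n w=0\quad\text{on }\Gamma_T,\qquad w|_{\{0\}\times D}=0.
\end{align*}
Testing this with $w$, integrating over $D$ and integrating by parts (the boundary term vanishes because $\partial_n w=0$), and then using the identity $\nabla u=\nabla w$, one obtains
\begin{align*}
\frac{\beta}{2}\,\frac{d}{dt}\,\|w(t)\|_{L^2(D)}^2
&=-\int_D u\,|\nabla w|^2\,dx-\int_D w\,\nabla u\cdot\nabla w\,dx\\
&=-\int_D u\,|\nabla w|^2\,dx-\int_D w\,|\nabla w|^2\,dx
=-\int_D (2u-1)\,|\nabla w|^2\,dx.
\end{align*}
Hence, on any time interval on which $u\ge\tfrac12$, the nonnegative function $t\mapsto\|w(t)\|_{L^2(D)}^2$ is nonincreasing; since it starts at $0$, it stays $0$, so $w\equiv0$ and $u\equiv1$ there. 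For each fixed such $t$ this also gives $u(t,\cdot)\equiv1$ pointwise on $\bar D$, because $u(t)\in W^{2,\infty}(D)\hookrightarrow C(\bar D)$.

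Thus the whole statement reduces to producing a slab $[0,\eps]\times D$ on which $u\ge\tfrac12$. For this I would upgrade the temporal continuity of $u$: from $u\in H^1(0,T;L^2(D))$ we have $u\in C([0,T];L^2(D))$, while each $u(t)$ lies in $W^{2,\infty}(D)$, which embeds compactly into $C(\bar D)$; a standard subsequence argument then promotes this to $u\in C([0,T];C(\bar D))$, and since $u(0)\equiv1$ there is indeed $\eps>0$ with $u\ge\tfrac12$ on $[0,\eps]\times\bar D$. Alternatively, the pointwise bound $|\partial_t u|=\tfrac1\beta|u|\,|\Delta u|\le\tfrac{C}{\beta}\|u(t)\|_{W^{2,\infty}(D)}\,|u|$ read off from the equation gives $u(t,x)\ge\exp\!\big(-\tfrac{C}{\beta}\int_0^t\|u(s)\|_{W^{2,\infty}(D)}\,ds\big)$, hence $u(t,\cdot)\to1$ uniformly as $t\to0^+$; in the situation where this lemma is applied $u$ is in fact considerably smoother, so this step is unproblematic. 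Combined with the previous paragraph, $u\equiv1$ on $[0,\eps]\times D$.

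Finally I would run a continuation argument: the set $\{t\in[0,T]:u\equiv1\text{ on }[0,t]\times D\}$ is nonempty, closed by the continuity just established, and relatively open because the equation is autonomous in $t$, so the local argument can be restarted from any time at which $u\equiv1$. Hence this set is all of $[0,T]$, which yields $u\equiv1$ in $D_T$. The one genuinely delicate point is the positivity of $u$ — excluding that $u$ dips below $\tfrac12$ before the energy estimate can be exploited — and this is precisely where the spatial regularity of $u$ is needed; the energy identity and the connectedness argument are then routine.
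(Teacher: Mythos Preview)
Your energy identity is correct, and the continuation argument can be made to work, but the paper's proof is considerably more direct and sidesteps the positivity issue entirely. Writing $e:=u-1$ as you do, the paper instead expands $u\Delta u=(1+e)\Delta e=\Delta e+e\Delta e$, so that the equation reads
\[
\beta\,\partial_t e-\Delta e=e\,\Delta e.
\]
Testing with $e$ and integrating by parts on the \emph{linear} term $-\Delta e$ gives
\[
\frac{\beta}{2}\,\partial_t\norm{e(t)}{L^2(D)}^2+\norm{\nabla e(t)}{L^2(D)}^2=\int_D e^2\,\Delta e\,dx\le\norm{\Delta e(t)}{L^\infty(D)}\norm{e(t)}{L^2(D)}^2,
\]
and a single application of Gronwall with $e(0)=0$ yields $e\equiv0$. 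The point is that by peeling off the good linear piece $-\Delta e$ first, the remaining nonlinear term $e\Delta e$ is quadratic in $e$ and can be absorbed regardless of the sign of $u$; there is no need to ensure $u\ge\tfrac12$, no continuity upgrade to $C([0,T];C(\bar D))$, and no open--closed continuation.

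Your route is not wrong, but the step you yourself flag as delicate---producing an interval on which $u\ge\tfrac12$ and re-initiating after $T^*$---requires continuity of $t\mapsto u(t)$ in $L^\infty(D)$, and your ``standard subsequence argument'' does not follow from the stated hypotheses alone: knowing only that each $u(t)\in W^{2,\infty}(D)$ without a uniform-in-$t$ bound does not give compactness. (To be fair, the paper's Gronwall step implicitly uses $\sup_t\norm{\Delta e(t)}{L^\infty(D)}<\infty$, which is likewise not explicit in the hypotheses; both proofs lean on the extra smoothness available in the application.) Still, the paper's argument is the cleaner one to adopt here: it replaces your sign condition and continuation machinery with a single Gronwall estimate.
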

\begin{proof}
Define $e:=u-1$. There holds
\begin{align*}
 \beta\partial_t e - e\Delta e - \Delta e=0\quad\text{and}\quad \partial_n e=0\text{ on }\Gamma_T.
\end{align*}
Multiplication by $e$ and integration by parts over $D$ shows
\begin{align*}
\frac{\beta}{2}\partial_t \norm{e(t)}{L^2(D)}^2 +\norm{\nabla e}{L^2(D)}^2\leq \norm{\Delta e(t)}{L^\infty(D)}\norm{e(t)}{L^2(D)}^2\lesssim \norm{e(t)}{L^2(D)}^2,
\end{align*}
by use of the regularity assumptions for the last inequality.
Thus, we have
\begin{align*}
\partial_t \norm{e(t)}{L^2(D)}^2\lesssim  \norm{e(t)}{L^2(D)}^2\quad\text{for all }0\leq t\leq T.
\end{align*}
Gronwall's inequality proves $\norm{e(t)}{L^2(D)}\lesssim \norm{e(0)}{L^2(D)}=0$, which concludes the proof.
\end{proof}

We next define a residual operator which will be used to generate a
sequence~$\{\bm_{\ell}\}$ converging to a solution~$\bm$
of~\cref{eq:llg}--\cref{eq:con}.
\begin{definition}\label{def:residual}
Let $x_0$ be an arbitrary point in~$D$ and~$\bm^0$ be the initial data given
in~\cref{eq:con3}. For any~$\bv\in H^{k,2k}(D_T)$ for some $k>0$, we define the
residual
\begin{equation}\label{eq:orig2}
\RR(\bv)
:=
\alpha\bv_t+\bv\times \bv_t- C_e|\bv|^2\Delta\bv -C_e|\nabla\bv|^2\bv.
\end{equation}
We also define a linear operator $L\colon\R^3\to\R^3$ by 
\begin{equation}\label{eq:L def}
L\ba := L_{\bm^0(x_0)}\ba
:=
\alpha\ba +\bm^0(x_0)\times \ba,
\quad \ba\in\R^3.
\end{equation}
\end{definition}

It is easy to see that $L$ satisfies~\cref{eq:L} with $c_L=\alpha$ and that
 \begin{align}
 \RR(\bv)
  &=\alpha\bv_t+\bv\times \bv_t-C_e\Delta\bv+ C_e(1-|\bv|^2)\Delta\bv
-C_e|\nabla\bv|^2\bv \notag\\
  &= \alpha\bv_t+\bm^0(x_0)\times\bv_t +(\bv-\bm^0(x_0))\times \bv_t-C_e\Delta\bv 
\notag \\
  &\quad
+ C_e(1-|\bv|^2)\Delta\bv -C_e|\nabla\bv|^2\bv \notag\\
  &= L\bv_t+(\bv-\bm^0(x_0))\times \bv_t-C_e\Delta\bv 
  + C_e(1-|\bv|^2)\Delta\bv -C_e|\nabla\bv|^2\bv,
\label{eq:form3}
 \end{align}
where $L$ is applied pointwise in time and space.

The following lemma gives some mapping properties of the operator~$\RR$.
(We recall the definition of~$H^1_{\star}(D)$ in~\cref{eq:H1 sta}.)
\begin{lemma}\label{lem:residual}
\mbox{}
\begin{enumerate}
\renewcommand{\labelenumi}{(\roman{enumi})}
\item
The residual operator $\RR$ defined in \cref{def:residual} is continuous 
from~$H^{k,2k}(D_T)$ into $H^{k-1,2k-2}(D_T)$ for $k\geq3$.
More precisely, there holds
\begin{align*}
\norm{\RR(\bv)&-\RR(\bw)}{H^{k-1,2k-2}(D_T)}\\
&\le
C_{\RR}(1+\norm{\bv}{H^{k,2k}(D_T)}^2+\norm{\bw}{H^{k,2k}(D_T)}^2)
\norm{\bv-\bw}{H^{k,2k}(D_T)}.
\end{align*}
\item
For~$k\ge 3$, if $\bw\in H^{k,2k}(D_T)$ satisfies
\[
\partial_t^iD^{j}\bw(0)\in H_{\star}^1(D) 
\quad\text{for all } i+j/2\le k-1,
\]
then
\[
\partial_t^i D^{j} \RR(\bw)(0)\in H^1_\star(D) 
\quad\text{for all } i+j/2\leq k-2.
\]
\end{enumerate}
\end{lemma}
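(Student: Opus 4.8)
The plan is to prove the two parts separately, both relying almost entirely on the algebraic structure of~$\RR$ in~\cref{eq:orig2} together with the multiplicative estimates of~\cref{lem:regnorm} and the chain rule for traces in~\cref{lem:sobolev}~(iii) and~\cref{lem:heatreg2}.

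For part~(i), I would start from the representation
\[
\RR(\bv) = \alpha\bv_t + \bv\times\bv_t - C_e|\bv|^2\Delta\bv - C_e|\nabla\bv|^2\bv
\]
and estimate the difference $\RR(\bv)-\RR(\bw)$ term by term. The linear term $\alpha(\bv_t-\bw_t)$ is controlled directly by~\cref{eq:Laplace2} with $i=1$, $m=1$, giving a bound by $|\bv-\bw|_{H^{k,2k}(D_T)}$. For the quadratic and cubic terms I would use the standard ``telescoping'' trick: write, e.g.,
\[
\bv\times\bv_t-\bw\times\bw_t = (\bv-\bw)\times\bv_t + \bw\times(\bv_t-\bw_t),
\]
and similarly $|\bv|^2\Delta\bv - |\bw|^2\Delta\bw = (|\bv|^2-|\bw|^2)\Delta\bv + |\bw|^2\Delta(\bv-\bw)$, using~\cref{eq:weird} for the first summand. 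The cubic term $|\nabla\bv|^2\bv$ needs one more splitting into three pieces. At each stage the factors $\bv_t$, $\Delta\bv$, $\nabla\bv$ lie in $H^{k-1,2k-2}(D_T)$ (resp.\ $H^{k-1,2k-2}$ and $H^{k-1,2k-2}$ after noting $k\ge3$ so $k-1\ge2$) by~\cref{eq:Laplace2}, and then the product estimates~\cref{eq:prod0}--\cref{eq:prod2} of~\cref{lem:regnorm} — which require the ambient regularity index to be $\ge2$, hence the hypothesis $k\ge3$ — close the bound, collecting a factor $1+\norm{\bv}{H^{k,2k}(D_T)}^2+\norm{\bw}{H^{k,2k}(D_T)}^2$ from the at-most-quadratic nonlinearity. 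Taking $\bw=0$ (note $\RR(0)=0$) then yields continuity of $\RR$ from $H^{k,2k}(D_T)$ into $H^{k-1,2k-2}(D_T)$.

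For part~(ii), the point is to track the initial traces $\partial_t^i D^j\RR(\bw)(0)$. Using the Leibniz rule, $\partial_t^i D^j$ applied to each summand of $\RR(\bw)$ produces a sum of products of terms of the form $\partial_t^{i'}D^{j'}\bw$ and $\partial_t^{i''}D^{j''}(\text{lower-order derivative of }\bw)$, each of which, evaluated at $t=0$, is of the form $\partial_t^aD^b\bw(0)$ with $a+b/2\le k-1$ — so each such factor lies in $H^1_\star(D)$ by hypothesis. Since $H^1_\star(D)$ is \emph{not} an algebra on the nose (it is closed under pointwise products only because of the Neumann condition and $H^2\hookrightarrow W^{1,\infty}$ in 3D, which gives $\partial_n(uv)=u\,\partial_n v+v\,\partial_n u=0$ and $\Delta(uv)\in L^2$), I would verify once that a product of finitely many $H^1_\star(D)$ functions that additionally lie in $H^2(D)$ again belongs to $H^1_\star(D)$; this follows from $\partial_t^aD^b\bw(0)\in H^1(D)$ whenever $a+b/2\le k-1$ (\cref{lem:sobolev}~(iii)), which gives the needed $H^2$-type control on the relevant factors when the total order is $\le k-2$. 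The constraint $i+j/2\le k-2$ is exactly what is needed so that every differentiated factor still satisfies $a+b/2\le k-1$; and the $\bv_t$ and $\Delta\bv$ appearing in $\RR$ cost one time-derivative or two space-derivatives, which is why the conclusion drops from $k-1$ to $k-2$.

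The main obstacle I anticipate is the bookkeeping in part~(ii): one must check carefully that every factor produced by the Leibniz rule, including the ones coming from $\Delta\bw$ and $\nabla\bw$ inside the nonlinear terms, still has trace index $\le k-1$, and that the product of such traces stays in $H^1_\star(D)$ rather than merely in $H^1(D)$. The Neumann-boundary bookkeeping — showing $\partial_n$ of each product vanishes and the Laplacian of each product is $L^2$ — is where the argument is most delicate, though it is entirely analogous to the verification already carried out in the proof of~\cref{lem:heatreg2}. Part~(i) is routine once the telescoping splittings are written down and~\cref{lem:regnorm} is invoked; the only subtlety there is confirming that all intermediate factors have regularity index $\ge2$, which is guaranteed by $k\ge3$.
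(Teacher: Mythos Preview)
Your approach to part~(i) is correct and essentially identical to the paper's: telescope $\RR(\bv)-\RR(\bw)$ into bilinear and trilinear pieces and apply the product estimates of \cref{lem:regnorm} at level $k-1\ge2$.

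For part~(ii) there is a gap. You want each Leibniz factor $\partial_t^aD^b\bw(0)$ to lie in $H^2(D)$ so that your product--closure lemma applies, and you justify this via \cref{lem:sobolev}~(iii). But that lemma only yields $H^1(D)$, and factors with index $a+b/2=k-1$ do occur (e.g.\ the factor $\partial_t^{k-1}\bw(0)$ arising from $\partial_t^{k-2}(\bw\times\bw_t)$ when $i=k-2$, $j=0$); for these you cannot ``shift two more space derivatives'' inside the hypothesis, since that would require index $\le k-2$. The one-line fix is to note that the hypothesis already places every such factor in $H^1_\star(D)$, and elliptic regularity for the Neumann problem (invoked elsewhere in the paper, e.g.\ in \cref{lem:heatreg}) gives $H^1_\star(D)\subset H^2(D)$; then your product lemma (which is correct: $H^2\hookrightarrow L^\infty$ in 3D makes $H^1_\star(D)$ an algebra) closes the argument. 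With this correction your route is actually \emph{simpler} than the paper's, which does not invoke $H^1_\star\subset H^2$ here but instead argues by pigeonhole---at least two of the three factors in each triple product have index $\le(k-1)/2$, hence lie in $H^{2,4}(D_T)$, so their second spatial derivatives at $t=0$ are in $H^1(D)\subset L^4(D)$---and then carries out an explicit $L^p$ case analysis of how $\Delta$ distributes across the three factors.
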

\begin{proof}
Statement (i) is proved by using \cref{lem:regnorm} 
(which is applicable because $k\ge3$) as follows:
\begin{align*}
 &\norm{\RR(\bv)-\RR(\bw)}{H^{k-1,2k-2}(D_T)}\nonumber\\
 &\lesssim \norm{\partial_t(\bv-\bw)}{H^{k-1,2k-2}(D_T)}\nonumber\\
 &\qquad +
\norm{(\bv-\bw)\times\partial_t\bv}{H^{k-1,2k-2}(D_T)}+\norm{\bw\times\partial_t(\bv-\bw)}{H^{k-1,2k-2}(D_T)}\nonumber\\
   &\qquad
+\norm{(|\bv|^2-|\bw|^2)\Delta\bv}{H^{k-1,2k-2}(D_T)}+\norm{|\bw|^2\Delta(\bv-\bw)}{H^{k-1,2k-2}(D_T)}\nonumber\\
 &\qquad +\norm{(\nabla(\bv-\bw)\cdot\nabla\bv)\bv}{H^{k-1,2k-2}(D_T)}
+\norm{(\nabla\bw\cdot\nabla(\bv-\bw))\bv}{H^{k-1,2k-2}(D_T)}\nonumber\\
 &\qquad+\norm{(\nabla\bw\cdot\nabla\bw)(\bv-\bw)}{H^{k-1,2k-2}(D_T)}\\
 &\lesssim \big(1+\norm{\bv}{H^{k,2k}(D_T)} + \norm{\bw}{H^{k,2k}(D_T)}\\
 &\qquad\quad
+\norm{\bv}{H^{k,2k}(D_T)}^2+ \norm{\bw}{H^{k,2k}(D_T)}^2\big)\nonumber
 \norm{\bv-\bw}{H^{k,2k}(D_T)}\\
 &\lesssim (1+\norm{\bv}{H^{k,2k}(D_T)}^2+\norm{\bw}{H^{k,2k}(D_T)}^2) \norm{\bv-\bw}{H^{k,2k}(D_T)}.
\end{align*}
To prove (ii) we note that since $\RR(\bw)\in H^{k-1,2k-2}(D_T)$
for~$k\ge 3$, \cref{lem:sobolev} gives
$\partial_t^i D^{j} \RR(\bw)(0)\in H^1(D)$ for all $i+j/2\leq k-2$.
It remains to show that 
$\Delta \partial_t^iD^{j}\RR(\bw)(0)\in L^2(D)$ 
and that the normal derivative of~$\partial_t^iD^{j}\RR(\bw)(0)$ is zero.
It is easy to see from the definition~\cref{eq:orig2} of $\RR$ and
the product rule that for $i+j/2\leq k-2$, the derivative
$\partial_t^iD^{j}\RR(\bw)$ is a sum of terms of the form
\begin{align}\label{eq:terms}
 (\partial_t^{n_1} D^{m_1}\bv_1)\odot_1 
\Big(
(\partial_t^{n_2} D^{m_2}\bv_2)\odot_2 (\partial_t^{n_3} D^{m_3}\bv_3)
\Big)
\end{align}
with $n_1+n_2+n_3+(m_1+m_2+m_3)/2\leq k-1$ and $\bv_s\in\{\bw,1\}$, $s=1,2,3$,
where $\odot_1$ and $\odot_2$ denote either the scalar, dot, or crossproduct.
Thus at least 2 elements in the set
$\{(n_1,m_2),(n_2,m_2),(n_3,m_3)\}$ satisfy
$n_i+m_i/2\leq (k-1)/2$. Without loss of generality we assume 
$i=2,3$. \cref{lem:regnorm} gives
\[
\partial_t^{n_i} D^{m_i} \bw\in
H^{k-\lceil(k-1)/2\rceil,2(k-\lceil(k-1)/2\rceil)}(D_T)\subseteq H^{2,4}(D_T)
\]
(because~$k\ge3$).
\cref{lem:sobolev}~(iii)\&(i) imply
\[
D^2(\partial_t^{n_i} D^{{m_i}}\bw(0))= \partial_t^{n_i} D^{m_i+2}\bw(0)\in
H^1(D)\subset L^6(D)\subset L^4(D),
\]
and thus
\[
\partial_t^{n_i} D^{m_i}\bw(0),\partial_t^{n_i} 
D^{m_i+1}\bw(0)\in H^2(D)\subseteq L^\infty(D), \quad i=1,2.
\]
The product rule shows (with the definition $\Delta^{1/2}:=D^1$) that $\Delta\partial_t^iD^{j}\RR(\bw)(0)$ is a sum of terms of the form
\begin{align}\label{eq:terms2}
 (\partial_t^{n_1}\Delta^{r_1} D^{m_1}\bv_1(0))\odot_1\Big( (\partial_t^{n_2}\Delta^{r_2} D^{m_2}\bv_2(0))\odot_2 (\partial_t^{n_3}\Delta^{r_3} D^{m_3}\bv_3(0))\Big)
\end{align}
with 
$r_s\in\{0,1/2,1\}$, $s=1,2,3$, satisfying $r_1+r_2+r_3=1$.
This and the considerations above 
together with the assumption $\partial_t^n D^{m}\bv_1(0)\in H^1_\star(D)$ show
\begin{align*}
 \big(\partial_t^{n_1}&\Delta^{r_1} D^{m_1}\bv_1(0), \partial_t^{n_2}\Delta^{r_2} D^{m_2}\bv_2(0),\partial_t^{n_3}\Delta^{r_3} D^{m_3}\bv_3(0)\big)\\
 &\in 
\begin{cases}
       L^2(D)\times L^\infty(D)\times L^\infty(D) &\text{for }r_1\in\{1/2,1\},
\ r_2,r_3\in \{0,1/2\}, \\
       H^1(D)\times L^4(D)\times L^\infty(D) &\text{for }r_1=0, r_2=1, r_3=0,\\
       H^1(D)\times L^\infty(D)\times L^4(D) &\text{for }r_1=0, r_2=0, r_3=1,\\
       H^1(D)\times L^\infty(D)\times L^\infty(D) &\text{for }r_1=0, r_2=r_3=1/2,\\
\end{cases}
\\
&\subseteq
\begin{cases}
       L^2(D)\times L^\infty(D)\times L^\infty(D) &\text{for }r_1\in\{1/2,1\},
\ r_2,r_3\in \{0,1/2\}, \\
       L^4(D)\times L^4(D)\times L^\infty(D) &\text{for }r_1=0, r_2=1, r_3=0,\\
       L^4(D)\times L^\infty(D)\times L^4(D) &\text{for }r_1=0, r_2=0, r_3=1,\\
       L^2(D)\times L^\infty(D)\times L^\infty(D) &\text{for }r_1=0, r_2=r_3=1/2.\\
\end{cases}
\end{align*}
Hence the product~\cref{eq:terms2} is in $L^2(D)$. This implies that
$\Delta\partial_t^iD^{j}\RR(\bw)(0)\in L^2(D)$.
Moreover, the normal derivatives of each factor of~\cref{eq:terms} are zero by definition, and thus the product rule
implies that also $\partial_t^i D^{j}\RR(\bw)(0)=0$, completing the proof of the lemma.
\end{proof}

The following lemma gives sufficient conditions for a given function~$\bm$
satisfying~$\RR(\bm)=0$ to be a solution to~\cref{eq:llg}--\cref{eq:con}.
\begin{lemma}\label{lem:dn}
If $\bm\in H^{k,2k}(D_T)$ for $k\geq 3$ satisfies 
\begin{equation}\label{eq:Rm}
\begin{aligned}
\RR(\bm)&=0 \text{ on } D_T,
\\
\partial_n\bm&=0 \text{ on } \Gamma_T,
\\
|\bm|&=1 \text{ on } \{0\}\times D,
\\
\bm(0,\cdot) &= \bm^0 \quad\text{in } D,
\end{aligned}
\end{equation}
then $\bm$ is a strong solution to~\cref{eq:llg}--\cref{eq:con}.

\end{lemma}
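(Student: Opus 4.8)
The plan is to first upgrade the initial-time identity $|\bm|=1$ to the full space-time identity $|\bm|\equiv 1$ on $D_T$ by deriving a scalar quasilinear heat equation for $u:=|\bm|^2$ and invoking \cref{lem:quasilin}; once $|\bm|^2\equiv 1$ is known, the pointwise identity $|\nabla\bm|^2=-\,\bm\cdot\Delta\bm$ turns $\RR(\bm)=0$ into exactly \cref{eq:llg2}, which — using \cref{eq:con2} — is equivalent to \cref{eq:llg}.

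First I would record the regularity of $u:=|\bm|^2$. By \cref{lem:regnorm}~(ii) (applied with $\bw=0$) we have $u\in H^{k,2k}(D_T)$, so in particular $u\in H^1(0,T;L^2(D))$. Moreover, since $k\ge 3$, the membership $\bm\in L^2(0,T;H^{2k}(D))\cap H^1(0,T;H^{2k-2}(D))$ together with standard interpolation in time gives $\bm\in C([0,T];H^{2k-1}(D))\subset C([0,T];H^5(D))\hookrightarrow C([0,T];W^{2,\infty}(D))$, the last embedding because $D\subset\R^3$. Hence $u(t)\in W^{2,\infty}(D)$ for every $t\in[0,T]$, $u|_{\{0\}\times D}=1$, and $\partial_n u = 2\,\bm\cdot\partial_n\bm = 0$ on $\Gamma_T$.

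Next I would take the Euclidean inner product of $\RR(\bm)=0$ with $\bm$. Using $\bm\cdot(\bm\times\bm_t)=0$, $\bm\cdot\bm_t=\tfrac12\partial_t u$, and the pointwise identity $\bm\cdot\Delta\bm=\tfrac12\Delta u-|\nabla\bm|^2$ (from $\Delta u=2\,\bm\cdot\Delta\bm+2|\nabla\bm|^2$), the two terms carrying $|\nabla\bm|^2$ cancel and one is left with
\[
\tfrac{\alpha}{2}\,\partial_t u-\tfrac{C_e}{2}\,u\,\Delta u=0\quad\text{in }D_T,
\]
i.e.\ $\beta\,\partial_t u-u\,\Delta u=0$ with $\beta:=\alpha/C_e>0$. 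Combined with $\partial_n u=0$ on $\Gamma_T$, $u|_{\{0\}\times D}=1$, and the regularity of $u$ just established, \cref{lem:quasilin} yields $u\equiv 1$ on $D_T$, that is, $|\bm|=1$ on $D_T$, which is \cref{eq:con2}.

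Finally, with $|\bm|^2\equiv 1$ in hand, differentiating twice in the spatial variables gives $\bm\cdot\partial_i\bm=0$ for $i=1,2,3$ and then $|\nabla\bm|^2+\bm\cdot\Delta\bm=0$ a.e.; substituting $|\nabla\bm|^2=-\,\bm\cdot\Delta\bm$ and $|\bm|^2=1$ into $\RR(\bm)=0$ turns it into $\alpha\bm_t+\bm\times\bm_t=C_e\Delta\bm-C_e(\bm\cdot\Delta\bm)\bm$, which is \cref{eq:llg2}; since that is equivalent to \cref{eq:llg} under \cref{eq:con2}, and since $\partial_n\bm=0$ on $\Gamma_T$ and $\bm(0,\cdot)=\bm^0$ hold by hypothesis while $\bm\in H^{k,2k}(D_T)$, the function $\bm$ is a strong solution of \cref{eq:llg}--\cref{eq:con}. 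The only genuinely delicate point is verifying the hypotheses of \cref{lem:quasilin} — in particular that $u(t)\in W^{2,\infty}(D)$ for \emph{all} $t\in[0,T]$, which is exactly where the assumption $k\ge 3$ and the time-continuity plus Sobolev embedding argument are needed; everything else is routine manipulation of the cross product and the product rule.
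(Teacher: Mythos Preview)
Your proof is correct and follows essentially the same route as the paper: set $u=|\bm|^2$, dot $\RR(\bm)=0$ with $\bm$ to obtain the scalar equation $\beta\,\partial_t u-u\,\Delta u=0$, invoke \cref{lem:quasilin} to get $|\bm|\equiv 1$, and then reduce $\RR(\bm)=0$ to \cref{eq:llg2}. The only cosmetic difference is that you establish $\bm(t)\in W^{2,\infty}(D)$ via the time-interpolation embedding $L^2(0,T;H^{2k}(D))\cap H^1(0,T;H^{2k-2}(D))\hookrightarrow C([0,T];H^{2k-1}(D))$, whereas the paper cites \cref{lem:sobolev}~(ii)--(iii) for the same conclusion.
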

\begin{proof}
It suffices to show that~$\bm$
satisfies~\cref{eq:con2} and~\cref{eq:llg2}.
The first property is shown by invoking \cref{lem:quasilin}. To this end, 
let~$u:=|\bm|^2$.
\cref{lem:sobolev}~(ii) shows $\bm\in W^{1,\infty}(D_T)$ and hence $u\in W^{1,\infty}(D_T)$. \cref{lem:sobolev}~(iii) proves
$\bm(t)\in H^5(D)\subseteq W^{2,\infty}(D)$, which implies $u(t)\in W^{2,\infty}(D)$ for all $0\leq t\leq T$.
Moreover, by using
\begin{equation}\label{eq:Dm}
\Delta|\bm|^2 = 2\Delta\bm\cdot\bm + 2|\nabla\bm|^2,
\end{equation}
together with~\cref{eq:orig2} and~\cref{eq:Rm} we obtain
\begin{align*}
\frac{\alpha}{2} \partial_t u - \frac{C_e}{2} u \Delta u 
&= 
\frac{\alpha}{2} \partial_t|\bm|^2 - \frac{C_e}{2}|\bm|^2 \Delta|\bm|^2 
\\
&=
\alpha\bm_t\cdot \bm- C_e|\bm|^2\Delta\bm\cdot\bm -C_e|\nabla\bm|^2|\bm|^2\\
&= 
\RR(\bm)\cdot\bm
= 0 \quad\text{in } D_T.
 \end{align*}
Assumption~\cref{eq:Rm} also implies 
$\partial_n u = \partial_n|\bm|^2 = 2\partial_n\bm \cdot\bm =0$ on $\Gamma_T$. Hence,
 \cref{lem:quasilin} yields $u=1$ in $D_T$, i.e.~\cref{eq:con2} holds, 
which in turn together with $\RR(\bm)=0$ implies
\begin{align*}
\alpha\bm_t+\bm\times \bm_t
=
 C_e\Delta\bm +C_e|\nabla\bm|^2\bm.
\end{align*}
It follows from~\cref{eq:Dm} that~$|\nabla\bm|^2=-\Delta\bm\cdot\bm$
so that~$\bm$ satisfies~\cref{eq:llg2}, completing the
proof of the lemma.

\end{proof}

Finally, since $\RR$ is not linear, we need the following lemma to
estimate~$\RR(\bv-\bw)$.
\begin{lemma}\label{lem:huge}
 Let $\bv,\bw\in H^{k,2k}(D_T)$ for $k\geq 3$. Then, there holds
 \begin{align}\label{eq:huge}
\norm{\RR(\bv-\bw)}{H^{k-1,2k-2}(D_T)}
&\lesssim
\norm{\RR(\bv)-(L\partial_t-C_e\Delta)\bw}{H^{k-1,2k-2}(D_T)}
\notag\\
&\quad
+
\norm{\bv-\bm^0(x_0)}{H^{k-1,2k-2}(D_T)}
\norm{\bw}{H^{k,2k}(D_T)}
\notag\\
&\quad
+
\norm{(1-|\bv|^2)\Delta\bw}{H^{k-1,2k-2}(D_T)}^2
\notag\\
&\quad
+
\norm{\bw}{H^{k,2k}(D_T)}
|\bv|_{H^{k,2k}(D_T)}
\big(
1
+
\norm{\bv}{H^{k,2k}(D_T)}
\big)
\notag\\
&\quad
+
\norm{\bw}{H^{k,2k}(D_T)}^2
\big(
1
+
\norm{\bv}{H^{k,2k}(D_T)}
\big)
+
\norm{\bw}{H^{k,2k}(D_T)}^3.
\end{align}
The hidden constant depends only on $C_e$ and on the constants from
\cref{lem:regnorm}.
\end{lemma}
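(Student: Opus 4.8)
The plan is to strip off the linear part of the residual and reduce the statement to an estimate for a nonlinear difference. Using the representation~\cref{eq:form3}, write $\RR(\bv)=(L\partial_t-C_e\Delta)\bv+\NN(\bv)$, where
\begin{align*}
\NN(\bv):=(\bv-\bm^0(x_0))\times\bv_t+C_e(1-|\bv|^2)\Delta\bv-C_e|\nabla\bv|^2\bv,
\end{align*}
and the same identity holds with $\bv$ replaced by $\bv-\bw$; note that $\NN$ contains neither $L$ nor $\alpha$, and involves the initial data only through the constant vector $\bm^0(x_0)$. Since $L\partial_t-C_e\Delta$ is linear and $\bm^0(x_0)$ is constant in time, subtracting gives
\begin{align*}
\RR(\bv-\bw)=\big(\RR(\bv)-(L\partial_t-C_e\Delta)\bw\big)+\big(\NN(\bv-\bw)-\NN(\bv)\big),
\end{align*}
so by the triangle inequality it suffices to bound $\norm{\NN(\bv-\bw)-\NN(\bv)}{H^{k-1,2k-2}(D_T)}$ by the remaining terms on the right-hand side of~\cref{eq:huge}.

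Next I would expand $\NN(\bv-\bw)-\NN(\bv)$ by multiplying out, using $|\bv-\bw|^2=|\bv|^2-2\bv\cdot\bw+|\bw|^2$ and $|\nabla(\bv-\bw)|^2=|\nabla\bv|^2-2\nabla\bv:\nabla\bw+|\nabla\bw|^2$. In each of the three groups (the cross product, the $(1-|\cdot|^2)\Delta$ term, the $|\nabla\cdot|^2$ term) the contribution depending on $\bv$ alone cancels exactly. What remains is a finite sum of products in which every summand carries at least one factor from $\{\bw,\nabla\bw,\Delta\bw,\bw_t\}$, with the single exception of the term $(\bv-\bm^0(x_0))\times\bw_t$, in which $\bv-\bm^0(x_0)$ survives as a unit. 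Crucially, $\bm^0(x_0)$ never appears by itself, which is exactly why the final constant can be taken independent of the initial data.

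The last step is to estimate each of these products in $H^{k-1,2k-2}(D_T)$ by means of~\cref{lem:regnorm} applied at regularity level $k-1\ge2$ (legitimate since $k\ge3$): part~(ii) makes the $H^{k-1,2k-2}(D_T)$-norm submultiplicative over products, iterated for the triple products; part~(i) gives $\bv_t,\nabla\bv,\Delta\bv\in H^{k-1,2k-2}(D_T)$ with norms controlled by $|\bv|_{H^{k,2k}(D_T)}$ (and likewise for $\bw$, where one further bounds the seminorm by $\norm{\bw}{H^{k,2k}(D_T)}$), the shift index in~\cref{lem:regnorm}(i) being $m=1\le k-1$ in each instance. One keeps $\norm{\bv-\bm^0(x_0)}{H^{k-1,2k-2}(D_T)}$ and $\norm{(1-|\bv|^2)\Delta\bw}{H^{k-1,2k-2}(D_T)}$ unexpanded, since they occur directly on the right of~\cref{eq:huge}, and uses $|\bv|_{H^{k,2k}(D_T)}\le\norm{\bv}{H^{k,2k}(D_T)}\le1+\norm{\bv}{H^{k,2k}(D_T)}$ freely to line up exponents. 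Collecting the outcome: $(\bv-\bm^0(x_0))\times\bw_t$ falls into the second term on the right of~\cref{eq:huge}, $(1-|\bv|^2)\Delta\bw$ into the third, the terms linear in one $\bw$-factor and carrying a derivative of $\bv$ into the fourth, those with two $\bw$-factors (times at most one $\bv$) into the fifth, and the cubic terms $|\nabla\bw|^2\bw$ and $|\bw|^2\Delta\bw$ into the sixth.

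The calculation itself is routine; the work is essentially organisational. The delicate points are the bookkeeping of the cancellations that remove the pure-$\bv$ contributions in all three groups, and resisting the temptation to rewrite $(1-|\bv|^2)\Delta\bw$ via~\cref{eq:weird} (which would produce a spurious extra factor $1+\norm{\bv}{H^{k,2k}(D_T)}$): it must be carried along as the explicit quantity appearing in~\cref{eq:huge}. Finally one checks that every invocation of~\cref{lem:regnorm} remains within its hypotheses, i.e.\ $k-1\ge2$ and shift index $\le k-1$.
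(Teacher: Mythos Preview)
Your proposal is correct and follows essentially the same route as the paper: both arguments write $\RR(\bv-\bw)=\big(\RR(\bv)-(L\partial_t-C_e\Delta)\bw\big)+\big(\NN(\bv-\bw)-\NN(\bv)\big)$ (the paper does this implicitly by computing $\RR(\bv-\bw)-\RR(\bv)$ from~\cref{eq:form3}), expand the nonlinear difference into a finite list of product terms, and estimate each via \cref{lem:regnorm} at level $k-1\ge2$. Your grouping of the resulting terms into the second through sixth lines of~\cref{eq:huge} matches the paper's bookkeeping for $T_1,\ldots,T_6$.
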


\begin{proof}
It can be easily derived from~\cref{eq:form3} that
\begin{align*}
\RR(\bv-\bw) - \RR(\bv)
&=
-
(L\partial_t-C_e\Delta)\bw
+
(\bv-\bm^0(x_0))\times\bw_t
+
\bw\times(\bv_t-\bw_t)
\\
&\quad
-
C_e
\big(
1-|\bv-\bw|^2
\big)
\Delta\bw
-
C_e
\big(
|\bw|^2 - 2(\bv\cdot\bw)
\big)
\Delta\bv
\\
&\quad
+
C_e|\nabla\bv-\nabla\bw|^2 \bw
-
C_e
\big(
|\nabla\bw|^2 - 2\nabla\bv\cdot\nabla\bw
\big)
\bv,
\end{align*}
so that
\begin{align*}
\RR(\bv-\bw) 
&=
\RR(\bv)
-
(L\partial_t-C_e\Delta)\bw
+
(\bv-\bm^0(x_0))\times\bw_t
+
\bw\times(\bv_t-\bw_t)
\\
&\quad
-
C_e
\big(
1-|\bv|^2+2\bv\cdot\bw-|\bw|^2
\big)
\Delta\bw
-
C_e
\big(
|\bw|^2 - 2(\bv\cdot\bw)
\big)
\Delta\bv
\\
&\quad
+
C_e
\big(
|\nabla\bv|^2
-2\nabla\bv\cdot\nabla\bw
+|\nabla\bw|^2 
\big)
\bw
-
C_e
\big(
|\nabla\bw|^2 - 2\nabla\bv\cdot\nabla\bw
\big)
\bv
\\
&=
\RR(\bv)
-
(L\partial_t-C_e\Delta)\bw
+ \bT_1 + \cdots + \bT_6.
\end{align*}
Hence
\begin{align*}
\norm{\RR(\bv&-\bw)}{H^{k-1,2k-2}(D_T)}\\
&\lesssim
\norm{\RR(\bv)-(L\partial_t-C_e\Delta)\bw}{H^{k-1,2k-2}(D_T)}
+
\sum_{i=1}^6
\norm{\bT_i}{H^{k-1,2k-2}(D_T)}.
\end{align*}
Denoting~$T_i = \norm{\bT_i}{H^{k-1,2k-2}(D_T)}$, \cref{lem:regnorm} yields
\begin{align*}
T_1
&\lesssim
\norm{\bv-\bm^0(x_0)}{H^{k-1,2k-2}(D_T)}
|\bw|_{H^{k,2k}(D_T)}
\\
&\le
\norm{\bv-\bm^0(x_0)}{H^{k-1,2k-2}(D_T)}
\norm{\bw}{H^{k,2k}(D_T)},
\\
T_2
&\lesssim
\norm{\bw}{H^{k-1,2k-2}(D_T)}
\big(
|\bv|_{H^{k,2k}(D_T)}
+
|\bw|_{H^{k,2k}(D_T)}
\big)
\\
&\le
|\bv|_{H^{k,2k}(D_T)}
\norm{\bw}{H^{k,2k}(D_T)}
+
\norm{\bw}{H^{k,2k}(D_T)}^2,
\\
T_3
&\lesssim
\norm{(1-|\bv|^2)\Delta\bw}{H^{k-1,2k-2}(D_T)}
+
\norm{\bv}{H^{k-1,2k-2}(D_T)}
\norm{\bw}{H^{k-1,2k-2}(D_T)}
|\bw|_{H^{k,2k}(D_T)}
\\
&\quad
+
\norm{\bw}{H^{k-1,2k-2}(D_T)}^2
|\bw|_{H^{k,2k}(D_T)}
\\
&\le
\norm{(1-|\bv|^2)\Delta\bw}{H^{k-1,2k-2}(D_T)}
+
\norm{\bv}{H^{k,2k}(D_T)}
\norm{\bw}{H^{k,2k}(D_T)}^2
+
\norm{\bw}{H^{k,2k}(D_T)}^3,
\\
T_4
&\lesssim
\norm{\bw}{H^{k-1,2k-2}(D_T)}^2
|\bv|_{H^{k,2k}(D_T)}
+
\norm{\bv}{H^{k-1,2k-2}(D_T)}
\norm{\bw}{H^{k-1,2k-2}(D_T)}
|\bv|_{H^{k,2k}(D_T)}
\\
&\le
\norm{\bv}{H^{k,2k}(D_T)}
\norm{\bw}{H^{k,2k}(D_T)}^2
+
|\bv|_{H^{k,2k}(D_T)}
\norm{\bv}{H^{k,2k}(D_T)}
\norm{\bw}{H^{k,2k}(D_T)}
\\
T_5
&\lesssim
|\bv|_{H^{k,2k}(D_T)}^2
\norm{\bw}{H^{k-1,2k-2}(D_T)}
+
|\bv|_{H^{k,2k}(D_T)}
|\bw|_{H^{k,2k}(D_T)}
\norm{\bw}{H^{k-1,2k-2}(D_T)}
\\
&\quad
+
|\bw|_{H^{k,2k}(D_T)}^2
\norm{\bw}{H^{k-1,2k-2}(D_T)}
\\
&\le
|\bv|_{H^{k,2k}(D_T)}^2
\norm{\bw}{H^{k,2k}(D_T)}
+
\norm{\bv}{H^{k,2k}(D_T)}
\norm{\bw}{H^{k,2k}(D_T)}^2
+
\norm{\bw}{H^{k,2k}(D_T)}^3,
\\
T_6
&\lesssim
|\bw|_{H^{k,2k}(D_T)}^2
\norm{\bv}{H^{k-1,2k-2}(D_T)}
+
|\bv|_{H^{k,2k}(D_T)}
|\bw|_{H^{k,2k}(D_T)}
\norm{\bv}{H^{k-1,2k-2}(D_T)}
\\
&\le
\norm{\bv}{H^{k,2k}(D_T)}
\norm{\bw}{H^{k,2k}(D_T)}^2
+
|\bv|_{H^{k,2k}(D_T)}
\norm{\bv}{H^{k,2k}(D_T)}
\norm{\bw}{H^{k,2k}(D_T)}.
\end{align*}
Collecting all the terms we obtain the desired estimates, completing the proof.
\end{proof}

\section{Proof of the Main Result}
This is a constructive proof. Starting with the initial guess
$\bm_0(t,x):=\bm^0(x)$ for all $(t,x)\in D_T$, we define a 
sequence~$(\bm_\ell)_{\ell\in\N_0}$ as follows.
Having defined~$\bm_\ell$, $\ell=0,1,2,\ldots,$ the construction involves 
the following tasks:
\begin{itemize}
\item
Define $\br_\ell:= \RR(\bm_\ell)$,
\item
Solve
 \begin{alignat*}{2}
  L\partial_t\bR_\ell-C_e \Delta\bR_\ell&=\br_\ell \quad&&\text{in }D_T,\\
  \partial_n\bR_\ell&=0\quad&&\text{on }\Gamma_T,\\
  \bR_\ell&=0\quad&&\text{on }\{0\}\times D.
 \end{alignat*}
\item
Define $\bm_{\ell+1}:= \bm_\ell - \bR_\ell$.
\end{itemize}

First we note that the above iteration is well-defined. Indeed, the
assumptions on the initial data~$\bm^0$ imply that the initial 
guess~$\bm_0$ belongs to $H^{k,2k}(D_T)$ and
satisfies~$\partial_t^iD^{j}\bm_0(0)\in H_\star^1(D)$ for all~$i+j/2\le k-1$.
\Cref{lem:residual,lem:heatreg2} then imply that~$\bR_0$ also has
the same smoothness properties, and so does~$\bm_1$. By repeating the same
argument, all functions~$\bm_\ell$ have the same smoothness properties
as~$\bm_0$, and the sequence~$\{\bm_\ell\}$ is well constructed.
Next we note that, $\bm_\ell|_{\{0\}\times D}=\bm_0|_{\{0\}\times D}=\bm^0$ and
$\partial_n\bm_\ell =\partial_n\bm_0=0$ for all $\ell\in\N$. Note also that due
to \cref{lem:heatreg2}
\begin{equation}\label{eq:rl Rl}
\norm{\bR_\ell}{H^{k,2k}(D_T)}
\le
C_{\br}
\norm{\br_\ell}{H^{k-1,2k-2}(D_T)},
\quad \ell=0,1,2,\ldots.
\end{equation}

We will show that the sequence~$(\bm_\ell)_{\ell\in\N_0}$ converges to a
function~$\bm$. \cref{lem:residual} then yields the convergence of~$\RR(\bm_\ell)$ 
to~$\RR(\bm)$ as~$\ell\to\infty$. \cref{lem:dn} will then be used to
conclude that~$\bm$ is a strong solution of~\cref{eq:llg}--\cref{eq:con}.

To show that~$\{\bm_\ell\}$ is a Cauchy sequence we note that
for~$0\le\ell'\le\ell$
\begin{align}\label{eq:ml mlp}
\norm{\bm_\ell-\bm_{\ell'}}{H^{k,2k}(D_T)}
&\le
\sum_{j=\ell'-1}^{\ell-2}
\norm{\bR_{j+1}}{H^{k,2k}(D_T)}.
\end{align}
Denoting
\[
R_j := \norm{\bR_j}{H^{k,2k}(D_T)},
\quad
m_j := \norm{\bm_j}{H^{k,2k}(D_T)},
\quad
m_{j,0} := |\bm_j|_{H^{k,2k}(D_T)},
\]
in order to estimate each term in the sum on the right hand side of~\cref{eq:ml mlp} 
we use~\cref{eq:rl Rl} and
invoke \cref{lem:huge} with~$\bv=\bm_j$ and~$\bw=\bR_j$, noting that
\[
\RR(\bm_j) = \br_j = (L\partial_t-C_e\Delta)\bR_j,
\]
to obtain
\begin{align}\label{eq:RmR}
\begin{split}
R_{j+1}
&\lesssim
\norm{\br_{j+1}}{H^{k-1,2k-2}(D_T)}\\
&
=
\norm{\RR(\bm_{j+1})}{H^{k-1,2k-2}(D_T)}
=
\norm{\RR(\bm_{j}-\bR_{j})}{H^{k-1,2k-2}(D_T)}
\\
&\lesssim
R_j \norm{\bm_j-\bm^0(x_0)}{H^{k-1,2k-2}(D_T)}
+
\norm{(1-|\bm_j|^2)\Delta\bR_j}{H^{k-1,2k-2}(D_T)}
\\
&\quad
+
R_jm_{j,0}(1 + m_j)
+
R_j^2 (1 + m_j)
+
R_j^3.
\end{split}
\end{align}
For the first term on the right hand side of~\cref{eq:RmR}
we note that $\bm_\ell(0,x_0)-\bm^0(x_0)=0$, and hence
\cref{lem:sobolev}~(i) yields (since $k\geq 3$)
\begin{align*}
 |\bm_j(t,x)-\bm^0(x_0)|&\leq ({\rm
diam}(D)^2+T^2)^{1/2}\norm{(\partial_t,\nabla)\bm_j}{L^\infty(D_T)}\\
 &\lesssim \norm{(\partial_t,\nabla)\bm_j}{H^{k-1,2k-2}(D_T)}
 \lesssim |\bm_j|_{H^{k,2k}(D_T)}.
\end{align*}
This implies
\begin{align}\label{eq:nest5}
\norm{\bm_j-\bm^0(x_0)}{H^{k-1,2k-2}(D_T)}
&\leq
\norm{\bm_j-\bm^0(x_0)}{L^2(D_T)}+|\bm_j|_{H^{k-1,2k-2}(D_T)}
\lesssim 
m_{j,0}.
\end{align}
For the second term on the right hand side of~\cref{eq:RmR}, we first observe
that since
\begin{equation}\label{eq:mj m0}
\bm_j
=
\bm_0
-
\sum_{i=0}^{j-1} \bR_i
\quad\text{and}\quad
|\bm_0(t,\cdot)| = |\bm^0| = 1,
\end{equation}
there holds
\[
|m_j|^2
=
|m_0|^2
-
2\bm_0\cdot\sum_{i=0}^{j-1}\bR_i
+
\big|\sum_{i=0}^{j-1}\bR_i\big|^2
\]
so that
\[
1-|\bm_j|^2
=
2\bm_0\cdot\sum_{i=0}^{j-1}\bR_i
-
\big|\sum_{i=0}^{j-1}\bR_i\big|^2.
\]
Thus, with the help of \cref{lem:regnorm}, we obtain
\begin{align}\label{eq:nest1}
\norm{(1-|\bm_j|^2)\Delta\bR_j}{H^{k-1,2k-2}(D_T)}
&\lesssim 
R_j
\Big(
m_0 \sum_{i=0}^{j-1} R_i
+
|\sum_{i=0}^{j-1} R_i|^2
\Big).
\end{align}
Altogether,~\cref{eq:RmR}--\cref{eq:nest1} imply
\begin{align}\label{eq:estt}
R_{j+1}
&\leq 
\widetilde C R_j
\Big(
(m_{j,0} + R_j)(1+m_j)
+ R_j^2
+ m_0\sum_{i=0}^{j-1}R_i +\big|\sum_{i=0}^{j-1}R_i\big|^2
\Big)
\notag\\
&=:
\widetilde CQ_j R_j,
\end{align}
for some constant $\widetilde C>0$, where~$Q_j$ is the sum of all the terms in
the brackets.
We will show that for all $q\in(0,1)$ there exists $\eps>0$ such that
$|\bm^0|_{H^{2k}(D)}\leq \eps$
implies
\begin{equation}\label{eq:CQj}
\widetilde CQ_j \le q\quad\text{for all }j\in\N_0.
\end{equation}
Given $q\in(0,1)$ (and with the constants $C_{\RR}$ from \cref{lem:residual}~(i),
and $C_{\br}$ from~\cref{eq:rl Rl}), we define $C_{\br\RR}:=C_{\br} C_{\RR}(3 + 2|D|+|D|^{1/2})$ and choose $0<\eps<1$ sufficiently small such that
\begin{align}\label{eq:q con}
\epsilon
\Big(
1 &+ C_{\br\RR} + \frac{C_{\br\RR}}{1-q}
\Big)
\Big(
1+\epsilon+|D|^{1/2}+\frac{C_{\br\RR}}{1-q}
\Big)
+
(C_{\br\RR}\epsilon)^2
\notag\\
&+
\Big(
\epsilon+|D|^{1/2}
\Big)
\frac{C_{\br\RR}\epsilon}{1-q}
+
\Big(
\frac{C_{\br\RR}\epsilon}{1-q}
\Big)^2
\le
{\widetilde C}^{-1}q.
\end{align}
This allows us to prove~\cref{eq:CQj} by induction. By assumption, $|\bm^0|_{H^{2k}(D)}$ is sufficiently small such that
\[
m_{0,0}=|\bm^0|_{H^{2k}(D)} \le \epsilon 
\]
and
\[
\norm{\bm_0-\bc}{H^{k,2k}(D)}=\norm{\bm^0-\bc}{H^{2k}(D)}\leq C_{\rm pc}|\bm^0|_{H^{2k}(D)} \le \epsilon
\]
with $\bc:=|D|^{-1}\int_D \bm^0\in \R^3$ where the Poincar\'e constant $C_{pc}>0$ depends only on $D$. By definition, we have $|\bc|=1$ and hence
\[
m_0 
\le 
\norm{\bm^0-\bc}{H^{k,2k}(D)}
+
\norm{\bc}{H^{k,2k}(D)}
\le 
\epsilon + |D|^{1/2}.
\]
Moreover, since~$\RR(\bc)=0$ we have, noting~\cref{eq:rl Rl},
\begin{align*}
R_0
&\le
C_{\br} 
\norm{\RR(\bm_0)}{H^{k-1,2k-2}(D_T)}
=
C_{\br} 
\norm{\RR(\bm_0)-\RR(\bc)}{H^{k-1,2k-2}(D_T)}
\\
&\le
C_{\br} C_{\RR}(1+m_0^2+\norm{\bc}{H^{k,2k}(D_T)}^2)
\norm{\bm_0-\bc}{H^{k,2k}(D_T)}\\
&\le
C_{\br} C_{\RR}(3 + 2|D|+|D|^{1/2}) \epsilon= C_{\br\RR}\eps.
\end{align*}
Hence
\begin{align*}
Q_0
&=
(m_{0,0} + R_0)(1+m_0)
+ R_0^2
\le
\epsilon (1+C_{\br\RR}) (1+|D|^{1/2}+\epsilon)
+ (C_{\br\RR} \epsilon)^2.
\end{align*}
Our choice of $\eps$ guarantees $\widetilde C Q_0\leq q$. To conclude the induction, assume that~$\widetilde CQ_i \le q$ for all
$i=0,\ldots,j-1$. Then the induction assumption and~\cref{eq:estt} give
\begin{equation}\label{eq:Rj qj}
R_j \le qR_{j-1} \le \cdots \le q^j R_0 \le q^j C_{\br\RR}\epsilon,
\end{equation}
which implies
\[
\sum_{i=0}^{j-1} R_i 
\le \sum_{i=0}^{j-1} q^i R_0
\le \frac{C_{\br\RR}\epsilon}{1-q}.
\]
Hence~\cref{eq:mj m0} proves
\[
m_{j,0} \le m_{0,0} + \sum_{i=0}^{j-1} R_i
\le 
\epsilon
\left(1+\frac{C_{\br\RR}}{1-q}\right)
\]
as well as
\begin{equation}\label{eq:mj m02}
m_{j} \le m_{0} + \sum_{i=0}^{j-1} R_i
\le 
\epsilon
+
|D|^{1/2}
+
\frac{C_{\br\RR}\epsilon}{1-q}.
\end{equation}
It then follows from the definition of~$Q_j$ and $\eps>0$ that~\cref{eq:CQj} holds for
all~$j$. This concludes the induction and proves~\cref{eq:CQj} for all $j\in\N_0$.

We now prove that $\{\bm_{\ell}\}$ is a Cauchy sequence. It follows
from~\cref{eq:ml mlp}, \cref{eq:Rj qj} that
\[
\norm{\bm_{\ell}-\bm_{\ell'}}{H^{k,2k}(D_T)}
\le
\sum_{j=\ell'-1}^{\ell-2} q^{j+1}R_0
\le
\frac{C_{\br\RR}\epsilon}{1-q} q^{\ell'} 
\to 0 \quad\text{as } \ell'\to\infty.
\]
Therefore, $\{\bm_\ell\}$ converges to some $\bm\in H^{k,2k}(D_T)$ which
satisfies, by passing to the limit in the first inequality in~\cref{eq:mj m02},
\begin{align}\label{eq:haha}
\norm{\bm}{H^{k,2k}(D_T)}
&\leq 
\norm{\bm_0}{H^{k,2k}(D_T)}+\sum_{j=0}^{\infty}\norm{\bR_j}{H^{k,2k}(D_T)}
\lesssim 
\norm{\bm_0}{H^{k,2k}(D_T)}
+
\frac{R_0}{1-q}.
\end{align}

It remains to prove that $\RR(\bm)=0$, which can easily be seen from the 
continuity of $\RR$ (see \cref{lem:residual}) and the definition
of~$\bR_{\ell}$:
\begin{align*}
\norm{ \RR(\bm)}{H^{k-1,2k-2}(D_T)}&= \lim_{\ell\to\infty}\norm{ \RR(\bm_\ell)}{H^{k-1,2k-2}(D_T)} \\
&= \lim_{\ell\to\infty}\norm{L\partial_t\bR_\ell -C_e\Delta\bR_\ell}{H^{k-1,2k-2}(D_T)}\lesssim \lim_{\ell\to\infty}\norm{\bR_\ell}{H^{k,2k}(D_T)}\\
&\lesssim \lim_{\ell\to\infty}q^\ell=0.
\end{align*}
As argued at the beginning of this proof, this shows that $\bm|_{D_T}$ is a strong
solution of~\cref{eq:llg}.

Finally, to show~\cref{eq:normsmooth} we note that~\cref{eq:rl Rl}, the continuity
of $\RR$, and the fact that $\RR(0)=0$ yield
\begin{align*}
	R_0\lesssim \norm{\br_0}{H^{k-1,2k-2}(D_T)}=\norm{\RR(\bm_0)-\RR(0)}{H^{k-1,2k-2}(D_T)}\lesssim 
	\norm{\bm_0}{H^{k,2k}(D_T)}.
\end{align*}
Hence~\cref{eq:normsmooth} follows from~\cref{eq:haha}, completing the proof of
the theorem.

\bibliographystyle{plain}
\bibliography{literature}

\end{document}